\definecolor{myred}{RGB}{255,50,50}         
\definecolor{myblack}{RGB}{0,0,0}           
\definecolor{myblue}{RGB}{0,0,210}
\newcommand{\inner}[2]{\langle#1,#2\rangle}      
\newcommand{\R}{\mathbb{R}}        
\newcommand{\SDP}{ \mathbb{S}_{+}^{m} }
\newcommand{\diag}{\mathrm{diag}}                
\newcommand{\grad}{\nabla}                       
\newcommand{\projs}{\Pi_{\mathbb{S}^m_+}}        
\newcommand{\T}{\top\hspace{-1pt}}               
\newcommand{\N}{\mathbb{N}}                      
\newcommand{\sym}{\mathbb{S}^m}     
\newcommand{\tr}[1]{\operatorname{tr}\left(#1\right)}       
\newcommand{\F}{\mathcal{F}}                     
\newcommand{\E}{\mathbb{E}}                      
\newtheorem{theorem}{Theorem}[section]
\newtheorem{lemma}[theorem]{Lemma}
\newtheorem{definition}[theorem]{Definition}
\newtheorem{proposition}[theorem]{Proposition}
\newtheorem{assumption}[theorem]{Assumption}
\newtheorem{remark}[theorem]{Remark}
\numberwithin{equation}{section}
\begin{document}


\title{Second-order sequential optimality conditions for nonlinear semidefinite optimization problems}
\author{
  Huimin Li$^{\ast}$ \and
  Yuya Yamakawa$^{\dagger}$ \and
  Ellen H. Fukuda$^{\ast}$
}

\begingroup
\renewcommand\thefootnote{}\footnotetext[1]{$^{\ast}$Graduate School of Informatics, Kyoto University, 
Yoshida-Honmachi, Sakyo-ku, Kyoto \mbox{606--8501}, Japan
(\texttt{hm.li@amp.i.kyoto-u.ac.jp}, \texttt{ellen@i.kyoto-u.ac.jp}).}
\footnotetext[2]{$^{\dagger}$Graduate School of Management, Tokyo Metropolitan University, 
1-1 Minami-Osawa, Hachioji-shi, Tokyo \mbox{192--0397}, Japan 
(\texttt{yuya@tmu.ac.jp}).}
\endgroup

\maketitle


\begin{abstract}
\noindent 
Sequential optimality conditions play an important role in constrained optimization since they provide necessary conditions without requiring constraint qualifications (CQs). This paper introduces a second-order extension of the Approximate Karush-Kuhn-Tucker (AKKT) conditions, referred to as AKKT2, for nonlinear semidefinite optimization problems (NSDP). In particular, we provide a formal definition of AKKT2, as well as its stronger variant, called Complementary AKKT2 (CAKKT2), and prove that these conditions are necessary for local minima without any assumption. Moreover, under Robinson’s CQ and the weak constant rank property, we show that AKKT2 implies the so-called weak second-order necessary condition. Finally, we propose a penalty-based algorithm that generates sequences whose accumulation points satisfy the AKKT2 and the CAKKT2 conditions.
\\ 
\noindent \textbf{Keywords:} Sequential optimality conditions, Second-order optimality conditions, nonlinear semidefinite optimization, conic optimization, penalty method
\end{abstract}


\section{Introduction}
We consider the following nonlinear semidefinite optimization problem (NSDP):
\begin{equation}
  \tag{NSDP}
  \begin{aligned}
    & \underset{x \in \mathbb{R}^{n}}{\text{minimize}}
    & & f(x) \\
    & \text{subject to}
    & & h(x) = 0, \\
    & & & G(x) \succeq 0,
  \end{aligned}
  \label{NSDP}
\end{equation}
where \( f \colon \mathbb{R}^{n} \to \mathbb{R} \), \( h \colon \mathbb{R}^{n} \to \mathbb{R}^{p} \), and \( G \colon \mathbb{R}^{n} \to \mathbb{S}^{m} \) are twice continuously differentiable functions, the notation \( \succeq \) denotes positive semidefiniteness of symmetric matrices, and \( \sym \) denotes the set of symmetric \( m \times m \) matrices.
\par
Nonlinear semidefinite optimization (NSDP) has been studied actively since 2000 and includes many types of optimization problems, such as nonlinear optimization (NLP), linear semidefinite optimization (SDP), and nonlinear second-order cone optimization (NSOCP) problems. Moreover, many researchers have developed methods~\cite{FL18, KFF09, Lourenco2018, SSZ08, YamakawaYamashita14, YamakawaYamashita15NSDP2, YamakawaYamashita15NSDP1, YY15} and solvers~\cite{KS03, Stu99, STYZ20, YFK10} for NSDP, and they have been applied in various fields, including control theory, structural optimization, material design, eigenvalue problems, and robust optimization~\cite{fares2001augmented, controllmi2, freund2007, robustness, konno2003estimation, qi2006quadratically, stingl2009sequential, vandenberghe1998determinant}. 
\par
The Karush-Kuhn-Tucker (KKT) conditions are the standard first-order necessary optimality conditions in constrained optimization. These conditions hold at local minimizers when a {constraint qualification} (CQ) is satisfied. In conic optimization settings, this typically involves assumptions such as Robinson's CQ or nondegeneracy.
However, when constraint qualifications do not hold, we usually cannot guarantee the existence of Lagrange multipliers satisfying the KKT conditions. As a result, the classical first-order necessary conditions become insufficient, and ordinary optimization methods may encounter difficulty in solving such problems.
\par
In the early 2010s, Andreani et al.~\cite{andreani2011sequential} introduced sequential optimality conditions for NLP, which are known as the Approximate KKT (AKKT) conditions. The AKKT conditions are known as genuine necessary conditions because they are satisfied at all local minimizers regardless of whether CQ holds or not, and have been extended from NLP to NSDP~\cite{Santos2021}. Moreover, under suitable CQs, they imply the KKT conditions.
Meanwhile, as another genuine necessary condition, the Complementary AKKT (CAKKT) has been proposed for NLP~\cite{Andreani2010cakktNLP} and extended to NSDP~\cite{Santos2021}. The CAKKT conditions refine the AKKT by incorporating a complementarity structure and provide tighter optimality for local minimizers while any CQs are not required. The AKKT and CAKKT are collectively referred to as sequential optimality conditions because they are defined by using sequences of iterates and multipliers.
\par
In recent years, second-order sequential optimality conditions have been proposed for NLP and NSDP. They not only require the first-order necessary conditions but also incorporate second-order information of the problem, making them strictly stronger than the AKKT or CAKKT conditions. The second-order AKKT (AKKT2) conditions introduced by Andreani et al.~\cite{akkt2} are the first second-order sequential optimality for NLP, and their validity as necessary conditions for optimality has been shown without assuming any CQs.
Moreover, an equivalent formulation of the AKKT2 conditions based on a perturbed critical subspace was established in~\cite{haeser2018some}. Moreover, the second-order CAKKT (CAKKKT2) conditions, which are a stronger variant of the AKKT2 conditions, were proposed in~\cite{cakkt2}, and the weak second-order necessary condition (WSONC) was presented in~\cite{ams07}. In particular, WSONC is more practical than the basic second-order necessary condition (BSONC) because some algorithms may generate sequences whose accumulation points fail to satisfy BSONC whereas WSONC can hold under mild assumptions. For details, see~\cite{abms2, gould1999note, Prieto2003}.
Regarding second-order sequential optimality for NSDP, WSONC originally developed for NLP was extended to NSDP in~\cite{fukuda2023weak}. However, second-order sequential optimality for NSDP corresponding to the AKKT2 conditions for NLP has not yet been established.
\par
Sequential optimality conditions play an important role not only in the theoretical aspect but also in algorithm design. For NSDP, several methods have been developed to generate points satisfying the AKKT conditions, including the augmented Lagrangian method~\cite{Andreani2020nsdp} and the sequential quadratic semidefinite programming (SQSDP)~\cite{yamakawa2022stabilized}. 
Moreover, an augmented Lagrangian method~\cite{Santos2021} and a primal-dual interior point method~\cite{Santos2021} have also been proposed to find points satisfying the CAKKT conditions for NSDP.
However, to the best of the authors' knowledge, there is no algorithm that generates points satisfying some second-order sequential optimality conditions for NSDP.
\par
This paper presents a formal definition of the AKKT2 conditions for NSDP by extending the AKKT2 conditions previously developed for NLP~\cite{akkt2, haeser2018some}.
Furthermore, by replacing the first-order AKKT conditions with the CAKKT conditions~\cite{Andreani2010cakktNLP}, we define the CAKKT2 conditions, which generalize the result in~\cite{cakkt2}.
We prove that both AKKT2 and CAKKT2 are genuine necessary conditions for local minimizers without requiring any assumptions. Compared with the existing first-order sequential necessary conditions, AKKT2 and CAKKT2 are stronger than the AKKT and CAKKT conditions respectively, making the new proposed AKKT2-type conditions more desirable as genuine necessary conditions than AKKT and CAKKT. On the other hand, this study is also an improvement of WSONC~\cite{fukuda2023weak}. Indeed, both AKKT2 and CAKKT2 serve as second-order optimality conditions without any assumptions, whereas WSONC requires Robinson's CQ and the weak constant rank (WCR) property. Moreover, we also show that the proposed AKKT2-type conditions imply WSONC under Robinson’s CQ and WCR.
\par
This paper is organized as follows. Section~\ref{sec:preliminaries} introduces the notation and presents terminologies that will be used throughout the paper. In Section~\ref{sec:existing}, we review existing optimality conditions for NSDP. Section~\ref{sec:AKKT2} provides AKKT2-type conditions for NSDP and investigates their theoretical properties. In Section~\ref{sec:algms}, we develop a penalty method to find points that satisfy AKKT2-type conditions. Finally, Section~\ref{sec:conclusion} provides concluding remarks and discusses potential directions for future research.


\section{Preliminaries}\label{sec:preliminaries}

This section provides the notation used throughout the paper and recalls well-known terminologies that are related to the subsequent analysis. We also review results concerning first- and second-order optimality conditions for NSDP.

We use the inner product on $\R^n$, defined by $\langle a, b \rangle \coloneqq \sum_{i=1}^n a_i b_i$, where $a_i$, $b_i  \in \R$ are the $i$-th element of $a$, $b\in \R^n$ respectively, and the associated Euclidean norm is defined by $\|a\| \coloneqq \sqrt{\langle a, a \rangle}$ for all $a \in \R^n$. For any matrix $M \in \R^{n \times n}$, we denote its entries by $M_{ij} \in \R$ for $i, j \in \{1, \dots, n \}$. In particular, for any sequence $\{ M_k \} \subset \R^{n \times n}$, the entries of $M_k \in \R^{n \times n}$ are defined as $M_{k, ij} \in \R $. The identity matrix in $\R^{n \times n}$ is denoted by $I_n$. The inner product on $\R^{m \times n}$ is given by $\inner{A}{B} \coloneqq {\rm tr}(A^\T B)$ for any $A,B \in \R^{m \times n}$. The Frobenius norm of a matrix $M \in \mathbb{R}^{m \times n}$ is denoted by $\Vert M \Vert_{{\rm F}} \coloneqq \sqrt{\langle M, M \rangle}$. Let $\mathbb{B}(x, r)$ denote the closed ball with center $x \in \R^n$ and radius $r > 0$. The feasible set associated with problem~\eqref{NSDP} is defined as
\(\mathcal{F} \coloneqq \left\{ x \in \mathbb{R}^n \;\middle|\; h(x) = 0, \; G(x) \succeq 0 \right\} \).
The gradient and Hessian of a function $f \colon \R^n \to \R$ at a point $x \in \R^n$ are denoted by $\grad f(x)$ and $\grad^2 f(x)$, respectively. Let $\E$ be {a finite-dimensional real Hilbert space equipped with an inner product $\langle \cdot, \cdot \rangle$}. 
The first-order derivative of a function $\phi \colon \R^n \to \E$ at $x$ is {represented as} ${\rm{D}} \phi(x) \colon \R^n \to \E$, defined by
\[
{\rm{D}}\phi(x)[{d}] \coloneqq \sum_{i=1}^n \partial_i \phi(x) d_i
\]
for all $d \in \R^n$, where $\partial_i \phi(x) \in \E$ denotes the partial derivative of $\phi$ with respect to the $i$-th variable, evaluated at $x$. In particular, if $\E = \R^m$, then ${\rm{D}}\phi(x)$ coincides with the Jacobian matrix of $\phi$ at $x$. In this case, the $i$-th row of ${\rm{D}}\phi(x)$ is the transpose of $\nabla \phi_i(x)$, denoted by $\nabla \phi_i(x)^\T$, for $i \in \{1, \dots, m\}$. The adjoint of ${\rm{D}}\phi(x)$ is written as ${\rm{D}}\phi(x)^{\ast} \colon \E \to \R^n$ defined by 
\[
{\rm{D}}\phi(x)^{\ast}[w] \coloneqq [\langle \partial_1 \phi(x), w \rangle, \dots, \langle \partial_n \phi(x), w \rangle]
\]
{for $w \in \mathbb{E}$}.
Analogously, the second-order derivative of $\phi$ at $x$ is represented as ${\rm D}^{2}\phi(x)\colon \R^n \times \R ^n \to \E$ defined by 
$$
{\rm D}^{2}\phi(x)[d,d] \coloneqq
\sum_{i=1}^{n}\sum_{j=1}^{n} \partial_{i}\partial_{j}\phi(x)\, d_{i} d_{j}
$$
for all $d\in \R^n$ and the adjoint of the second-order derivative of $\phi$ is denoted as
${\rm{D}}^2 \phi(x)^{\ast} \colon \E \to \R^{n \times n}$, defined by
\begin{align*}
{\rm{D}}^2 \phi(x)^{\ast}[w] \coloneqq 
\begin{bmatrix}
    \langle \partial_1 \partial_1 \phi(x), w \rangle & \cdots & \langle \partial_1 \partial_n \phi(x), w \rangle \\
    \vdots & \ddots & \vdots \\
    \langle \partial_n \partial_1 \phi(x), w \rangle & \cdots & \langle \partial_n \partial_n \phi(x), w \rangle 
\end{bmatrix}
\end{align*}
for all $w \in \E$.
\par
Let us now introduce some results of nonsmooth analysis \cite{Urruty1984, paleszeidan}.
Let \(X\) and \(Y\) be finite-dimensional normed linear spaces over \(\R\).
Let \(F \colon X \to Y\) be a {continuously differentiable (hence locally Lipschitz)} function. We denote by ${\cal D}(F) \subset X$ the set of all points such that $F$ is differentiable. The B-subdifferential of \( F \) at a point \( x \in X \) is defined by
\[
\partial_B F(x) \coloneqq \left\{ V \in \mathcal{L}(X, Y) : \exists \{x^k\} \subset \mathcal{D}(F),\ x^k \to x,\ {\rm D}F(x^k) \to V \right\},
\]
where \( \mathcal{L}(X, Y) \) denotes the space of linear mappings from \( X \) to \( Y \).
For every \( x \in X \), the set \( \partial_B F(x) \) is compact, and it is a singleton if \( x \in \mathcal{D}(F) \).
The Clarke subdifferential of \( F \) at \( x \), denoted by \( \partial F(x) \), is defined as the convex hull of \( \partial_B F(x) \), that is, $\partial F(x) \coloneqq \mathrm{Conv}\left( \partial_B F(x) \right)$.
In the special case where \(X = \mathbb{R}^n\) and \(Y = \mathbb{R}\), {and using the fact that \(F\) is $C^{1}$ so the gradient map \(\nabla F\) is defined everywhere,} the {generalized Hessian} of \(F\) at a point \(x \in \mathbb{R}^n\) is defined as $\partial^2 F(x) \coloneqq \partial \nabla F(x)$.
\par
Next, we introduce some results related to semidefinite optimization. Let $\sym$ be the set of symmetric matrices in $\R^{m \times m}$, and $\SDP$ the positive semidefinite cone. For any matrix $A$, we also use $A \succeq 0$, $A \succ 0$, and $A \prec 0$ when $A$ is positive semidefinite, positive definite, and negative definite, respectively.
We also define the Jordan product of any $A, B \in \sym$ as
\begin{align*}
A \circledcirc B \coloneqq \frac{1}{2} (AB + BA).
\end{align*}
For a given $M \in \sym$ with an orthogonal diagonalization $M=U \Lambda U^\T$, we define $\lambda_i^U(M)$ as the eigenvalue of $M$ at {$(i,i)$-entry of} the diagonal matrix $\Lambda$. The largest eigenvalue of $M$ is denoted by $\lambda_{\max}^U(M)$. The index sets corresponding to the positive, zero, and negative eigenvalues of $M$ are defined as
\begin{align*}
\alpha\coloneqq \{i\mid \lambda^U_i(M) > 0 \}, \quad 
\beta\coloneqq \{i\mid \lambda^U_i(M) = 0 \}, \quad
\gamma\coloneqq \{i\mid \lambda^U_i(M) < 0 \}.
\end{align*}
When using a diagonalization in which the eigenvalues of $M$ are ordered in non-increasing order in $\Lambda$, namely, $\lambda^U_1(M) \geq \cdots \geq \lambda^U_m(M)$, we omit the superscript $U$ and express the spectral decomposition~as
\begin{equation}\label{eq: decomp_sym}
M=U \Lambda U^\T, \quad 
\Lambda =
\begin{bmatrix}
\Lambda_\alpha & 0 & 0\\ 
0 & 0 & 0\\
0 & 0 & \Lambda_\gamma
\end{bmatrix},   
\quad
U = [U_\alpha, U_\beta, U_\gamma],
\end{equation}
where $\mathbb{S}^{|\alpha|}\ni\Lambda_\alpha \succ 0$, $\mathbb{S}^{|\gamma|}\ni\Lambda_\gamma\prec 0$, and $U_\alpha \in \R^{m \times |\alpha|}$, $U_\beta \in \R^{m \times |\beta|}$, $U_\gamma \in \R^{m \times |\gamma|}$ form a partition of the columns of the orthogonal matrix $U$. 
The orthogonal projection of a point $w \in \mathbb{E}$ onto a closed convex set $\mathcal{K} \subset \mathbb{E}$ is denoted by $\Pi_{\mathcal{K}}(w) \in \mathcal{K}$ which is defined by
\begin{align*}
\|w - \Pi_{\mathcal{K}}(w)\| = \min \left\{ \|w - z\| : z \in \mathcal{K} \right\}.
\end{align*}
In particular, if $\mathcal{K} = \SDP$ and $M \in \sym$ is given with an orthogonal decomposition
\begin{align*}
M =U \diag(\lambda_1^U(M), \ldots, \lambda_m^U(M)) U^\T,
\end{align*}
then, we have
\begin{align*}
\projs \left(M\right) \coloneqq U \diag\left(\left[\lambda_1^U(M)\right]_{+}, \ldots,\left[\lambda_m^U(M)\right]_{+}\right) U^\T,
\end{align*}
where $[v]_{+}\coloneqq\max \{0, v\}$ for any $ v \in \mathbb{R}$. In the subsequent analysis, we use the following function ${\cal P}$:
\begin{equation} \label{def:penaltyterm}
\mathcal{P}(x) \coloneqq \frac{1}{2} \left( \|h(x)\|^2 + \|\projs(-G(x))\|_{\rm F}^2 \right),
\end{equation}
which measures the constraint violation of~\eqref{NSDP}. The following result yields an explicit formula for the gradient of \( \mathcal{P} \).

\begin{theorem}[{\cite[Theorem~2.2]{Fitzpatrick1982}}]\label{thm:fitz}
For all \( x \in \mathbb{R}^n \), the gradient of \( \mathcal{P} \) is given by
\begin{align*}
\nabla \mathcal{P}(x) = {\rm{D}}h(x)^\T h(x) - {\rm{D}}G(x)^{\ast} \left[\projs(-G(x))\right].
\end{align*}
\end{theorem}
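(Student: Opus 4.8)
The plan is to split $\mathcal{P}$ into the two summands $\mathcal{P}_1(x) \coloneqq \frac{1}{2}\|h(x)\|^2$ and $\mathcal{P}_2(x) \coloneqq \frac{1}{2}\fnorm{\projs(-G(x))}^2$ and to differentiate each via the chain rule. The term $\mathcal{P}_1$ is routine: since $\mathcal{P}_1(x) = \frac{1}{2}\inner{h(x)}{h(x)}$, the chain rule gives $\grad \mathcal{P}_1(x) = {\rm{D}}h(x)^\T h(x)$, which is exactly the first summand of the claimed formula. All the difficulty is concentrated in $\mathcal{P}_2$, whose outer map is the squared Frobenius norm of the projection onto $\SDP$, a map that fails to be differentiable at matrices possessing a zero eigenvalue.

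The crux is a differentiability result for the squared projection onto a closed convex cone. Define $\Theta \colon \sym \to \R$ by $\Theta(M) \coloneqq \frac{1}{2}\fnorm{\projs(M)}^2$. I would first prove that $\Theta$ is continuously differentiable with $\grad \Theta(M) = \projs(M)$, even though $\projs$ itself is only locally Lipschitz. The tool is the Moreau decomposition $M = \projs(M) + \Pi_{-\SDP}(M)$, whose two components are orthogonal in the Frobenius inner product, since the polar cone of $\SDP$ under the trace inner product is $-\SDP$. This orthogonality yields $\fnorm{M}^2 = \fnorm{\projs(M)}^2 + \fnorm{M - \projs(M)}^2$, so that $\Theta(M) = \frac{1}{2}\bigl(\fnorm{M}^2 - d(M)^2\bigr)$, where $d(M) \coloneqq \fnorm{M - \projs(M)}$ is the distance from $M$ to $\SDP$. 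I would then use the identity $\Theta(M) = \sup_{Z \in \SDP}\bigl\{\inner{M}{Z} - \frac{1}{2}\fnorm{Z}^2\bigr\}$, which is immediate once one rewrites $\inner{M}{Z} - \frac{1}{2}\fnorm{Z}^2 = \frac{1}{2}\fnorm{M}^2 - \frac{1}{2}\fnorm{Z - M}^2$ and recognizes $\projs(M)$ as the unique maximizer, and apply a Danskin-type argument to this value function to conclude that $\Theta$ is $\mathcal{C}^1$ with $\grad\Theta(M) = \projs(M)$.

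With this in hand, the final step applies the chain rule to $\mathcal{P}_2(x) = \Theta(-G(x))$. The inner map $x \mapsto -G(x)$ has derivative whose adjoint is $-{\rm{D}}G(x)^{\ast}$, so $\grad\mathcal{P}_2(x) = -{\rm{D}}G(x)^{\ast}[\grad\Theta(-G(x))] = -{\rm{D}}G(x)^{\ast}[\projs(-G(x))]$. Summing the gradients of $\mathcal{P}_1$ and $\mathcal{P}_2$ produces the stated expression. The main obstacle is the differentiability claim for $\Theta$: one must reconcile the nonsmoothness of $\projs$ at matrices with a zero eigenvalue with the $\mathcal{C}^1$ smoothness of the composite. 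The Moreau/Danskin route resolves this precisely because it establishes smoothness of $\Theta$ through the value function, without ever differentiating the projection itself.
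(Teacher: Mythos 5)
Your proof is correct, but there is no internal argument in the paper to compare it against: the paper does not prove Theorem~\ref{thm:fitz} at all, it imports the statement wholesale from \cite[Theorem~2.2]{Fitzpatrick1982}. What you have written is therefore a self-contained substitute for that external citation, and it is the standard convex-analytic route. The $h$-term is routine, as you say, and you correctly concentrate the work in the claim that $\Theta(M) \coloneqq \frac{1}{2}\fnorm{\projs(M)}^2$ is $\mathcal{C}^1$ with $\nabla\Theta(M)=\projs(M)$, obtained from the Moreau decomposition (self-duality of $\SDP$, whose polar cone is $-\SDP$) together with the value-function representation $\Theta(M)=\sup_{Z\in\SDP}\{\inner{M}{Z}-\frac{1}{2}\fnorm{Z}^2\}$, whose maximizer is the unique point $\projs(M)$. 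Two remarks on the one step you wave at rather than execute. First, the classical Danskin theorem assumes a compact constraint set, and $\SDP$ is unbounded; to make that step airtight, either restrict the supremum to a fixed ball containing the maximizers for all $M$ near the point of interest (possible because $\projs$ is nonexpansive), or, cleaner, note that $\Theta$ is the Fenchel conjugate of the $1$-strongly convex function equal to $\frac{1}{2}\fnorm{Z}^2$ on $\SDP$ and $+\infty$ outside, and conjugates of strongly convex functions are everywhere differentiable with gradient equal to the unique maximizer; continuity of $\projs$ then upgrades this to $\mathcal{C}^1$. Second, your own identity $\Theta(M)=\frac{1}{2}\fnorm{M}^2-\frac{1}{2}d(M)^2$ already gives a shortcut that avoids Danskin entirely: it is classical (Moreau, Zarantonello --- and essentially what the cited Fitzpatrick result packages) that half the squared distance to a closed convex set in a Hilbert space is $\mathcal{C}^1$ with gradient $M-\projs(M)$, so subtracting from $\frac{1}{2}\fnorm{M}^2$ yields $\nabla\Theta(M)=\projs(M)$ in one line. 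Either repair is standard, so I would classify your submission as a correct proof by a legitimate (and more explicit) route than the paper's proof-by-citation, with only the compactness caveat in the Danskin step to tidy up.
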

\noindent
Although the mapping $\nabla \mathcal{P}$ is Lipschitz continuous, it is generally not differentiable due to the projection operator $\projs$. As a result, when requiring the second-order information of $\cal{P}$, we exploit the Clarke subdifferential of $\projs$. 
Sun~\cite{Sun2006} provided a characterization of the Clarke subdifferential of the projection onto the positive semidefinite cone. To provide this result, we define the following matrix $\mathcal{B}(\lambda^U(M) ) \in \sym$ for a given $M \in \sym$ with spectral decomposion~\eqref{eq: decomp_sym}:
\begin{equation} \label{matrixB}
\mathcal{B}(\lambda^U(M) )_{ij} \coloneqq \frac{\max\{\lambda_i^U(M), 0\} + \max\{\lambda_j^U(M), 0\}}{| \lambda_i^U(M) | + |\lambda_j^U(M)|},
\end{equation}
where 0/0 is defined as 1. 

\begin{proposition}[{\cite[Proposition~2.2]{Sun2006}}] \label{cor:qi}
Suppose that $M\in \sym$ has the spectral decomposition~\eqref{eq: decomp_sym}. Then, for any $V\in \partial \Pi_{\mathbb{S}^m_+} (-M)$ there exists a $V_{\vert\beta\vert}\in \partial \Pi_{\mathbb{S}^{\vert\beta\vert}_+}(0)$ such that
\begin{equation}\label{qi}
V[H]=U
\begin{bmatrix}
0 & 0 & {\mathcal{B}_{\alpha\gamma} (\lambda^U({-} M))} \odot\tilde{H}_{\alpha\gamma}   \\ 
0  & V_{\vert\beta\vert}[\tilde{H}_{\beta\beta}] & \tilde{H}_{\beta\gamma} \\ 
{\mathcal{B}_{\alpha\gamma}(\lambda^U({-} M))} ^\T   \odot\tilde{H}_{\alpha\gamma}^\T   & \tilde{H}_{\beta\gamma}^\T & \tilde{H}_{\gamma\gamma} \\
\end{bmatrix} U^\T 
\end{equation}
for every $H\in \mathbb{S}^m$, where $\tilde{H} \coloneqq U^\T H U$. Conversely, for every $V_{\vert\beta\vert}\in \partial \Pi_{\mathbb{S}^{\vert\beta\vert}_+}(0)$, there exists some $V\in \partial \Pi_{\mathbb{S}^m_+} (-M)$ such that \eqref{qi} holds.
\end{proposition}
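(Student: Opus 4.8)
The plan is to treat $\projs$ as a Löwner (spectral) operator associated with the scalar function $g(t) \coloneqq [t]_+ = \max\{t,0\}$, and to compute its Clarke subdifferential through the standard route $\partial \projs(-M) = \mathrm{Conv}(\partial_B \projs(-M))$. First I would record the eigenstructure of $-M$: writing $M$ as in~\eqref{eq: decomp_sym}, the matrix $-M = U(-\Lambda)U^\T$ has negative eigenvalues on the index set $\alpha$, zero eigenvalues on $\beta$, and positive eigenvalues on $\gamma$, so $\projs$ retains precisely the $\gamma$-block. The only source of nonsmoothness is the zero eigenvalue, since $g$ is differentiable away from the origin.

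Next I would establish the differentiability and the derivative formula at regular points. A spectral operator $G_g(N) = U_N\, g(\Lambda_N)\, U_N^\T$ is Fréchet differentiable at $N$ if and only if $g$ is differentiable at every eigenvalue of $N$; for $g = [\cdot]_+$ this means $N$ is nonsingular. At such $N$ the derivative is the Hadamard-product formula
\[
{\rm D}\projs(N)[H] = U_N\bigl(\Omega(N)\odot (U_N^\T H U_N)\bigr)U_N^\T,
\]
where $\Omega(N)_{ij}$ is the first divided difference of $g$ at the eigenvalue pair $(\mu_i,\mu_j)$ of $N$. A direct computation shows that this divided difference equals exactly the entry $\mathcal{B}(\lambda^U(N))_{ij}$ defined in~\eqref{matrixB}: it is $1$ when both eigenvalues are positive, $0$ when both are nonpositive, and the stated fraction when the signs differ.

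The core of the argument is the computation of $\partial_B \projs(-M)$ by taking limits of ${\rm D}\projs(N^k)$ along sequences of nonsingular $N^k \to -M$. Here the eigenvalues in the $\alpha$- and $\gamma$-blocks are bounded away from $0$ and retain their signs, so the divided differences on the $\alpha\alpha$, $\alpha\beta$, $\alpha\gamma$, $\beta\gamma$, $\gamma\gamma$ blocks converge to the fixed coefficients $0$, $0$, $\mathcal{B}_{\alpha\gamma}$, $1$, $1$ displayed in~\eqref{qi}, \emph{independently} of the perturbation direction (both the positive and negative splittings of each collapsing $\beta$-eigenvalue give the same limit). The $\beta$-block eigenvalues, however, collapse to $0$, and by a block-decoupling argument — choosing the perturbation to be block-diagonal in the $U$-frame and using that the eigenvectors associated with the $\beta$-group converge to $U_\beta$ up to an orthogonal rotation inside the block — the restriction of $\lim_k {\rm D}\projs(N^k)$ to the $\beta\beta$-block ranges over precisely $\partial_B \Pi_{\mathbb{S}^{|\beta|}_+}(0)$, since near $0$ in $\mathbb{S}^{|\beta|}$ the projection is the same Löwner operator in dimension $|\beta|$.

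Finally I would pass to convex hulls. The blocks outside $\beta\beta$ are rigidly determined, while the $\beta\beta$-block freedom is exactly parametrized by $\partial_B \Pi_{\mathbb{S}^{|\beta|}_+}(0)$; taking $\mathrm{Conv}(\cdot)$ on both sides and using $\partial\Pi_{\mathbb{S}^{|\beta|}_+}(0) = \mathrm{Conv}(\partial_B\Pi_{\mathbb{S}^{|\beta|}_+}(0))$ yields the claimed two-sided correspondence between $V \in \partial\projs(-M)$ and $V_{|\beta|}\in \partial\Pi_{\mathbb{S}^{|\beta|}_+}(0)$. The main obstacle is the block-decoupling step: eigenvectors are only determined up to rotations within the degenerate $\beta$-eigenspace, and the zero eigenvalue can split under perturbation in many ways, so controlling the limiting derivative on the $\beta\beta$-block — and verifying that the cross-blocks converge to the stated constants regardless of how the split occurs — requires the careful perturbation analysis underlying the strong semismoothness of $\projs$.
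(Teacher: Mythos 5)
A preliminary remark: the paper offers no proof of this proposition at all --- it is imported verbatim from \cite[Proposition~2.2]{Sun2006}, so there is no in-paper argument to compare yours against; the relevant comparison is with the proof strategy of the cited literature. Measured against that, your outline is the standard one and its computational content is correct: $\projs$ is the L\"owner operator of $[t]_+$, it is differentiable exactly at nonsingular arguments with the divided-difference (Daleckii--Krein) formula, the divided differences of $[\cdot]_+$ coincide entrywise with $\mathcal{B}(\lambda^U(-M))$ as defined in \eqref{matrixB}, the cross-block coefficients along any nonsingular sequence $N^k \to -M$ converge to the constants $0,0,\mathcal{B}_{\alpha\gamma},1,1$ appearing in \eqref{qi} regardless of how the collapsing $\beta$-eigenvalues split in sign, and the final passage from $\partial_B$ to the Clarke subdifferential is legitimate because the parametrization $V_{|\beta|} \mapsto V$ in \eqref{qi} is linear, so convex hulls commute with it.

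The genuine gap is that your block-decoupling step, as written, only proves the easy half of the B-subdifferential characterization. Choosing the perturbation block-diagonal in the $U$-frame establishes that every $W \in \partial_B \Pi_{\mathbb{S}^{|\beta|}_+}(0)$ is realized by some $V \in \partial_B \projs(-M)$ of the form \eqref{qi} --- that is the ``Conversely'' direction. The first (and harder) assertion of the proposition is the opposite inclusion: \emph{every} $V \in \partial_B \projs(-M)$, i.e.\ every limit of ${\rm D}\projs(N^k)$ along \emph{arbitrary} nonsingular sequences $N^k \to -M$ that you do not get to choose, must have the rigid structure \eqref{qi} with $\beta\beta$-block lying in $\partial_B \Pi_{\mathbb{S}^{|\beta|}_+}(0)$. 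For arbitrary sequences the $\beta$-eigenvalues split uncontrollably, the eigenframes $U_{N^k}$ converge only along subsequences and only up to orthogonal rotations within each eigenspace of $-M$ (with asymptotically vanishing mixing across groups), and one must (i) check that the fixed cross-block coefficients survive this conjugation, using that $\mathcal{B}$ is constant on each eigenspace sub-block, and (ii) recognize the conjugated limit of the $\beta\beta$-pattern as an element of $\partial_B \Pi_{\mathbb{S}^{|\beta|}_+}(0)$, which uses the orthogonal equivariance of the lower-dimensional projection. You explicitly defer exactly this step (``requires the careful perturbation analysis underlying the strong semismoothness of $\projs$''), but that step is the mathematical substance of the result; as it stands the proposal is a faithful outline of Sun's proof rather than a proof.
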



To define the AKKT2 conditions for NSDP, we need some preparations to investigate the perturbation of the critical subspace, which will be introduced in Section~\ref{sec:existing}.
For any $ G(\bar x) \in \SDP$ with $\bar x \in \cal{F}$, {w}e adopt a spectral decomposition of $ G(\bar x)$ with eigenvalues of $G(\bar x)$ ordered in non-increasing order,~i.e.,
\begin{equation}\label{eq: decomp_G_sdp}
 G(\bar x) = U \Lambda U^\T, \quad
\Lambda = 
\begin{bmatrix}
\Lambda_{\bar \alpha} & 0 \\ 
0 & \Lambda_{\bar \beta} \\ \end{bmatrix},
\quad U = [U_{\bar \alpha}, U_{\bar \beta}],
\end{equation}
where $\mathbb{S}^{|\bar \alpha|}\ni\Lambda_{\bar \alpha} = \diag(\lambda_1  (G(\bar x)) , \dots, \lambda_{|\bar \alpha|} (G(\bar x)) ) \succ 0$ such that $\lambda_i (G(\bar x)) > 0$ for all $i \in \bar \alpha$, $ \mathbb{S}^{|\bar \beta|} \ni \Lambda_{\bar \beta} = 0$ such that $\lambda_j (G(\bar x)) = 0$ for all $j \in \bar \beta$.
The matrix $U \in \mathbb{R}^{m \times m}$ is orthogonal, and the columns of $U_{\bar{\beta}}$ span the kernel space of $G(\bar x)$, denoted by $\ker{(G(\bar x)})$. Note that the columns of $U_{\bar \beta}$ are not necessarily equal to the eigenvectors of $G(\bar x)$. 
\par
In the following, for the matrix $G(x)$ and a matrix lying in its neighborhood, we provide a property that characterizes the relation between their kernels.

\begin{lemma}[{\cite[Ex.~3.140]{bshapiro}}]
\label{sdp:lemma-bs}
Let $G(\bar x) \in \sym_+$, $\bar{\beta}$ {be} the indices of zero eigenvalues of $G(\bar x)$, and $U_{\bar{\beta}}$ be a matrix with orthonormal columns that span $\textnormal{ker}(G(\bar x))$. {Then,} there exists a neighborhood $\mathcal{N}$ of $G(\bar x)$ and an analytic matrix function $\mathcal{U}_{\bar{\beta}}\colon \mathcal{N}\to \R^{m\times \vert\bar\beta\vert}$ such that $\mathcal{U}_{\bar{\beta}}(G(\bar x))=U_{\bar{\beta}}$ and, for every $G(x) \in \mathcal{N}$, the columns of $\mathcal{U}_{\bar{\beta}}(G( x))$ form an orthonormal basis for the space spanned by the eigenvectors associated with the $\vert\bar\beta\vert$ smallest eigenvalues of $G(x)$.
\end{lemma}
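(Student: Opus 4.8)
The plan is to invoke the analytic perturbation theory of spectral projections, the key structural fact being that the zero eigenvalues of $G(\bar x)$ form a cluster that is separated from its positive eigenvalues by a spectral gap, and that this gap persists under sufficiently small perturbations. This lets us isolate the invariant subspace of the $|\bar\beta|$ smallest eigenvalues by a fixed complex contour that varies analytically in no way at all, while the enclosed spectral projection varies analytically with the matrix.

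Concretely, first exploit the spectral gap: let $\mu>0$ be strictly smaller than the smallest positive eigenvalue of $G(\bar x)$, and let $\Gamma\subset\mathbb{C}$ be the circle of radius $\mu$ centered at the origin, taken symmetric about the real axis. Then $\Gamma$ encloses exactly the $|\bar\beta|$ zero eigenvalues of $G(\bar x)$ and none of the positive ones. By continuity of the spectrum, there is a neighborhood $\mathcal{N}$ of $G(\bar x)$ so that for every $M\in\mathcal{N}$ the circle $\Gamma$ avoids the spectrum of $M$ and encloses precisely the $|\bar\beta|$ smallest eigenvalues of $M$ (which are small perturbations of the zeros, hence within $\mu$ of the origin). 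On $\mathcal{N}$ define the Riesz spectral projection
$$ P(M) \coloneqq \frac{1}{2\pi i}\oint_{\Gamma}(zI_m - M)^{-1}\,dz. $$
Since the resolvent $(zI_m-M)^{-1}$ is a rational, hence analytic, function of $M$ whenever $z\notin\operatorname{spec}(M)$, and $\Gamma$ stays in the resolvent set uniformly over $\mathcal{N}$, the map $M\mapsto P(M)$ is real analytic; because $M$ is symmetric and $\Gamma$ is symmetric about the real axis, $P(M)$ is the (real, symmetric) orthogonal projection onto the invariant subspace spanned by the eigenvectors of the $|\bar\beta|$ smallest eigenvalues of $M$. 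In particular $\operatorname{rank}P(M)=|\bar\beta|$ and $P(G(\bar x))$ is the orthogonal projection onto $\textnormal{ker}(G(\bar x))$.

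To produce an analytic orthonormal frame, set $W(M)\coloneqq P(M)U_{\bar\beta}\in\R^{m\times|\bar\beta|}$, which is analytic and whose columns span $\operatorname{range}P(M)$. At $M=G(\bar x)$ we have $W(G(\bar x))=P(G(\bar x))U_{\bar\beta}=U_{\bar\beta}$, whose columns are orthonormal, so $W(M)^\T W(M)$ is close to $I_{|\bar\beta|}$ and thus positive definite after shrinking $\mathcal{N}$. Define the symmetric (Löwdin) orthonormalization
$$ \mathcal{U}_{\bar\beta}(M) \coloneqq W(M)\bigl(W(M)^\T W(M)\bigr)^{-1/2}. $$
As $W(M)^\T W(M)$ is analytic and positive definite, its inverse square root is analytic, so $\mathcal{U}_{\bar\beta}$ is an analytic matrix function whose columns are orthonormal and span the target eigenspace; at $M=G(\bar x)$ it reduces to $U_{\bar\beta}(U_{\bar\beta}^\T U_{\bar\beta})^{-1/2}=U_{\bar\beta}$, giving the required normalization $\mathcal{U}_{\bar\beta}(G(\bar x))=U_{\bar\beta}$.

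The main obstacle is establishing genuine analyticity (not merely continuity) together with realness of the projection: analyticity follows from the analytic dependence of the resolvent on $M$ and the fact that $\Gamma$ remains uniformly bounded away from $\operatorname{spec}(M)$ on $\mathcal{N}$, while realness follows from taking $\Gamma$ symmetric about the real axis. The remaining ingredients—continuity of eigenvalues, invertibility of $W(M)^\T W(M)$ near $G(\bar x)$, and analyticity of the matrix inverse square root of an analytic positive-definite family—are standard and can be cited or verified directly.
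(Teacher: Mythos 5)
Your proposal is correct, but there is nothing in the paper to compare it against: the paper does not prove this lemma at all, it simply cites it as \cite[Ex.~3.140]{bshapiro} (Bonnans and Shapiro). What you have written is therefore a self-contained proof of the cited result, and it follows the classical route for such statements: isolate the eigenvalue cluster by a fixed contour $\Gamma$ exploiting the spectral gap, define the Riesz projection $P(M)=\frac{1}{2\pi i}\oint_{\Gamma}(zI_m-M)^{-1}dz$, which depends analytically on $M$ because $\Gamma$ stays uniformly in the resolvent set over $\mathcal{N}$, and then turn the analytic frame $W(M)=P(M)U_{\bar\beta}$ into an analytic orthonormal one by L\"owdin orthonormalization $W(M)\bigl(W(M)^{\T}W(M)\bigr)^{-1/2}$. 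This is essentially the same machinery that underlies the exercise in Bonnans--Shapiro, so your argument is a legitimate filling-in of details rather than a detour.

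Three small points to tighten. First, realness and orthogonality of $P(M)$ use the symmetry of $M$, so $\mathcal{N}$ should be declared a neighborhood of $G(\bar x)$ \emph{in} $\sym$; this matters because in the paper the lemma is applied to $G(x^k)$ with $x^k$ infeasible, where the enclosed $|\bar\beta|$ smallest eigenvalues may be negative --- your construction handles this correctly, but say so. Second, your definition of $\mu$ presupposes that $G(\bar x)$ has a positive eigenvalue; the degenerate cases $\bar\beta=\emptyset$ (lemma trivial) and $G(\bar x)=0$ (take any fixed $\mu>0$ and shrink $\mathcal{N}$ so all eigenvalues stay inside $\Gamma$) deserve a one-line remark. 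Third, the assertion that the columns of $W(M)$ span $\operatorname{range}P(M)$ should be stated \emph{after} you establish positive definiteness of $W(M)^{\T}W(M)$: full column rank plus $\operatorname{rank}P(M)=|\bar\beta|$ is what gives the spanning, and at that point the column space of $\mathcal{U}_{\bar\beta}(M)$ equals that of $W(M)$, completing the claim.
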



\section{Existing optimality conditions}\label{sec:existing}

In this section, we review the existing optimality conditions for NSDP. Let the Lagrange function $L\colon \R^n \times \mathbb{R}^p \times \sym_+ \to \mathbb{R}$ for problem \eqref{NSDP} be given by
$$
L(x, \mu, \Omega ) \coloneqq f(x) - \inner{h(x)}{\mu} - \inner{G(x)}{\Omega},
$$
where $\mu \in \mathbb{R}^p$ and $\Omega \in \sym_+$ are the Lagrange multipliers associated with the equality and the conic constraints, respectively.
We start with the following first-order optimality conditions.

\begin{definition}
We say that $\bar x \in \R^n$ satisfies the KKT conditions if $\bar x \in \F$ and there exists $(\bar \mu,\bar  \Omega)  \in  \mathbb{R}^p \times \sym_+ $ associated with $\bar x$ satisfying
\begin{align*}
\nabla_x L(\bar  x,\bar  \mu, \bar   \Omega) = 0 ,\quad   \inner{G(\bar  x)}{\bar \Omega} = 0.    
\end{align*}
\end{definition}
\noindent
A point \(\bar x \) is called a KKT point if there exist multipliers \(\bar  \mu \) and \(\bar  \Omega \) such that the triplet \( (\bar x, \bar \mu, \bar \Omega) \) satisfies the KKT conditions. However, they may fail to hold at local minimizers if no CQ holds. In this study, we introduce the following Robinson's CQ.

\begin{definition}
We say Robinson's CQ for \eqref{NSDP} holds at a feasible point $\bar x$ when ${\rm D}h(\bar x)$ has full row rank and there exists some $d\in \mathbb{R}^n$ such that
\begin{align*}
G(\bar x)+{\rm D}G(\bar x)[d] \succ 0 ,\quad  {\rm D}h(\bar x)d=0.
\end{align*}
\end{definition}
\noindent
It should be noticed that Robinson's CQ is not always satisfied, and in such cases, the KKT conditions may fail to serve as necessary conditions.
To address this issue, several sequential optimality conditions have been developed. They provide necessary optimality even if no CQ holds. Among them, the most basic one is the Approximate KKT (AKKT) conditions, introduced in~\cite{andreani2011sequential} for NLP and later extended to \eqref{NSDP} in~\cite{Andreani2020nsdp}. We recall the AKKT conditions for~\eqref{NSDP} below.

\begin{definition}\label{def:akkt}
We say that $\bar x \in \R^n$ satisfies the AKKT conditions if $\bar x \in \F$ and there exists a sequence $\{(x^k, \mu^k, \Omega_k) \}\subset \mathbb{R}^n \times  \mathbb{R}^p \times \sym_+ $ such that 
\begin{gather*}
\lim_{k \to \infty} x^k = \bar{x}, \quad \lim_{k \to \infty} \nabla_x L(x^k, \mu^k, \Omega_k) = 0, 
\\
\lambda_j^{U}(G(\bar{x})) > 0 \quad \Longrightarrow \quad \exists k_{j} \in \mathbb{N} \quad {\rm s.t.} \quad \lambda_j^{U_k}(\Omega_k)=0 \quad \forall k \geq k_{j}, 
\end{gather*}
where $j \in \{ 1,\ldots, m \}$ is an arbitrary integer, $U$ and $U_{k}~(k \in \mathbb{N})$ are orthogonal matrices such that $U_{k} \to U ~ (k \to \infty)$, $G(\bar{x}) = U {\rm diag}[\lambda_{1}^{U}(G(\bar{x})), \ldots, \lambda_{m}^{U}(G(\bar{x}))] U^{\top}$, and $\Omega_{k} = U_{k} {\rm diag}[\lambda_{1}^{U_{k}}(\Omega_{k}), \ldots, \lambda_{m}^{U_{k}}(\Omega_{k})] U_{k}^{\top}$.
\end{definition}
\noindent
We say that a point satisfying the AKKT condition is an AKKT point. Moreover, a sequence $\{(x^k, \mu^k, \Omega_k)\}$ that satisfies the conditions in Definition~\ref{def:akkt} is an AKKT sequence corresponding to $\bar{x}$.
It is shown that any local minimizer of problem~\eqref{NSDP} satisfies the AKKT conditions, and that certain practical algorithms, such as augmented Lagrangian and SQP-type methods, are capable of generating AKKT points. Furthermore, under  Robinson's CQ, the AKKT conditions are reduced to the KKT conditions, see~\cite{Andreani2020nsdp}.

Next, we also provide the Complementary AKKT (CAKKT) conditions, which are other sequential optimality conditions for~\eqref{NSDP}.

\begin{definition}\label{def:cakkt}
We say that $\bar x \in \R^n$ satisfies the CAKKT conditions if $\bar x \in \F$ and there exists a sequence $\{(x^k, \mu^k, \Omega_k) \}\subset \mathbb{R}^n \times  \mathbb{R}^p \times \sym_+ $ such that 
\begin{gather*}
\lim_{k \to \infty} x^{k} = \bar{x}, \quad \lim_{k\to \infty} \nabla_x L(x^k, \mu^k, \Omega_k) = 0, \quad \lim_{k \to \infty} G(x^k) \circledcirc \Omega_k = 0. 
\end{gather*}
\end{definition}
\noindent
Similarly, a point $\bar x$ satisfying CAKKT and a sequence $\{(x^k, \mu^k, \Omega_k)\}$ satisfying the conditions in Definition~\ref{def:cakkt}
are referred to as a CAKKT point and a CAKKT sequence corresponding to $\bar{x}$, respectively.
As with {the AKKT conditions}, local minimizers of~\eqref{NSDP} satisfy the CAKKT conditions~\cite{Santos2021, andreani2024optimality}, and algorithms designed to compute CAKKT points for NSDP have also been proposed in~\cite{Santos2021, Okabe2023}. Moreover, as shown in~\cite[Section~3]{Santos2021}, CAKKT implies AKKT, but the reverse implication does not hold.

We now present a result that establishes the connection between the CAKKT conditions and a certain penalty function. 
The proof could be found in~\cite[Theorem 3.2]{andreani2024optimality} for the nonlinear symmetric cone optimization problems, which generalizes NSDP.

\begin{theorem} \label{thm:minAKKT}
Let $ \bar x$ be a local minimizer of \eqref{NSDP}, and let $\{ \rho_{k} \}$ be a positive sequence satisfying $\rho_{k} \to \infty~(k \to \infty)$. Then, there exists $\{x^k\} \subset \mathbb{R}^{n}$ such that $x^k \to \bar x~(k \to \infty)$ and for every $k$, $x^{k}$ is a local minimizer of the regularized penalty function 
\begin{equation} \label{eq:penlty_function}
\Psi_k(x) \coloneqq  f(x)+
\frac{\rho_k}{2} \left(  \|h(x)\|^2  + \|\projs(-G(x))\|_{\rm F} ^2
\right)+ \frac{1}{4}\|x- \bar x \|^4  .
\end{equation}
Moreover, if $\mu^k \coloneqq - \rho_k h(x^k) \in \mathbb{R}^{p}$ and $\Omega_k \coloneqq \rho_k \Pi_{\sym_+}(-G(x^k)) \in \mathbb{S}_{+}^{m}$ for all $k \in \mathbb{N}$, then $\{ (x^k, \mu^k, \Omega_k) \}$ is a CAKKT sequence, i.e., $\bar x$ is a CAKKT point. Consequently, $\bar x$ is also an AKKT point.
\end{theorem}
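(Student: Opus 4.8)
The plan is to run the classical external-penalty argument, treating $\Psi_k$ as a penalized objective and extracting the multipliers from its stationarity. Since $\bar x$ is a local minimizer of \eqref{NSDP}, I would first fix $\delta>0$ such that $f(\bar x)\le f(x)$ for every feasible $x\in\mathbb{B}(\bar x,\delta)$. For each $k$, the function $\Psi_k$ is continuous and $\mathbb{B}(\bar x,\delta)$ is compact, so by Weierstrass's theorem $\Psi_k$ attains a global minimum over the ball at some $x^k$. Because $\bar x$ is feasible we have $\mathcal{P}(\bar x)=0$ and $\Psi_k(\bar x)=f(\bar x)$, whence
\[
f(x^k)+\tfrac{\rho_k}{1}\mathcal{P}(x^k)+\tfrac14\norm{x^k-\bar x}^4\le f(\bar x).
\]
Since $f$ is bounded below on the compact ball, this inequality forces $\mathcal{P}(x^k)\le M/\rho_k\to0$ for a constant $M$; hence every accumulation point $\hat x$ of $\{x^k\}$ is feasible (by continuity of $\mathcal{P}$) and satisfies $f(\hat x)+\tfrac14\norm{\hat x-\bar x}^4\le f(\bar x)$. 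Local optimality gives $f(\hat x)\ge f(\bar x)$, so $\hat x=\bar x$, and compactness yields $x^k\to\bar x$. In particular, for all large $k$ the point $x^k$ is interior to the ball, hence an unconstrained local minimizer of $\Psi_k$; discarding finitely many initial indices gives the asserted sequence.

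Next I would extract the first-order conditions. At each such $x^k$ we have $\grad\Psi_k(x^k)=0$, and applying Theorem~\ref{thm:fitz} to $\grad\mathcal{P}$ together with the derivative of the quartic term yields
\[
\grad f(x^k)+\rho_k{\rm D}h(x^k)^\T h(x^k)-\rho_k{\rm D}G(x^k)^{\ast}[\projs(-G(x^k))]+\norm{x^k-\bar x}^2(x^k-\bar x)=0.
\]
Setting $\mu^k\coloneqq-\rho_k h(x^k)$ and $\Omega_k\coloneqq\rho_k\projs(-G(x^k))\in\SDP$ identifies the first three terms as $\grad_x L(x^k,\mu^k,\Omega_k)$, so that $\grad_x L(x^k,\mu^k,\Omega_k)=-\norm{x^k-\bar x}^2(x^k-\bar x)\to0$. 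Together with $x^k\to\bar x$, this establishes the first two requirements of the CAKKT conditions.

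The hard part is the complementarity limit $G(x^k)\circledcirc\Omega_k\to0$. The key structural observation is that $\projs(-G(x^k))$ is built from the spectral decomposition of $G(x^k)$, so the two matrices commute and the Jordan product collapses to the ordinary product $\rho_k\,G(x^k)\projs(-G(x^k))$; its nonzero eigenvalues are exactly $-\rho_k\lambda^2$ ranging over the negative eigenvalues $\lambda$ of $G(x^k)$. Writing $\lambda_i^-\coloneqq\max\{0,-\lambda_i(G(x^k))\}$, this gives $\fnorm{G(x^k)\circledcirc\Omega_k}=\rho_k\big(\sum_i(\lambda_i^-)^4\big)^{1/2}$, while $\fnorm{\projs(-G(x^k))}^2=\sum_i(\lambda_i^-)^2$. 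A naive bound here yields only boundedness, so the decisive refinement is to use that $f(x^k)\to f(\bar x)$: the penalty inequality above then forces $\rho_k\mathcal{P}(x^k)\to0$, not merely $\rho_k\mathcal{P}(x^k)=O(1)$. Setting $\varepsilon_k\coloneqq\rho_k\fnorm{\projs(-G(x^k))}^2\to0$, the elementary estimate
\[
\sum_i(\lambda_i^-)^4\le\Big(\max_i(\lambda_i^-)^2\Big)\sum_i(\lambda_i^-)^2\le\frac{\varepsilon_k}{\rho_k}\cdot\frac{\varepsilon_k}{\rho_k}
\]
gives $\fnorm{G(x^k)\circledcirc\Omega_k}\le\varepsilon_k\to0$. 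This verifies complementarity, so $\{(x^k,\mu^k,\Omega_k)\}$ is a CAKKT sequence and $\bar x$ is a CAKKT point; the final assertion that $\bar x$ is an AKKT point then follows from the already-recorded implication that CAKKT implies AKKT.
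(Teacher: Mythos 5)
Your proposal is correct and takes essentially the same route as the proof this paper relies on: the paper gives no inline argument but defers to the classical quartic-regularized penalty proof of \cite[Theorem~3.2]{andreani2024optimality}, which is precisely your construction --- Weierstrass minimization of $\Psi_k$ on a ball, $x^k \to \bar x$ via local optimality, stationarity of $\Psi_k$ (through Theorem~\ref{thm:fitz}) identifying $\mu^k$ and $\Omega_k$, and the key refinement $\rho_k \mathcal{P}(x^k) \to 0$ (forced by $f(x^k) \to f(\bar x)$) driving the complementarity limit, with your commutation/spectral estimate for $G(x^k)\circledcirc\Omega_k$ being the SDP specialization of the Jordan-algebra step used there. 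The only blemishes are cosmetic: the stray $\tfrac{\rho_k}{1}$ should read $\rho_k$ (since $\mathcal{P}$ already carries the factor $\tfrac12$), and discarding initial indices yields local minimality of $\Psi_k$ only for all sufficiently large $k$ (re-indexing would mismatch the parameters $\rho_k$), a harmless convention that the cited statement shares since only the tail of the sequence matters for the CAKKT property.
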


From now on, we review second-order necessary optimality conditions for~\eqref{NSDP}. To this end, the sigma-term is introduced as follows: for given $\bar{x} \in \mathbb{R}^{n}$ and $\bar{\Omega} \in \sym$, we define the sigma-term associated with a pair \( (\bar x, \bar\Omega) \) as
\begin{align*} 
\sigma(\bar x, \bar \Omega) \coloneqq [2\langle \bar \Omega,\partial_i G(\bar x) G(\bar x)^{\dagger} \partial_j G(\bar x) \rangle]_{i,j\in\{1,\dots,n\}},
\end{align*}
where $G(\bar x)^\dagger$ denotes the Moore-Penrose pseudoinverse of $G(\bar x)$. Notice that the sigma-term does not appear in NLP. Thus, the existence of the sigma-term represents a fundamental distinction between NLP and NSDP regarding second-order optimality.

We now define a basic second-order necessary condition (BSONC) for~\eqref{NSDP}. First, let us define the critical cone at a point $\bar x$ as 
\begin{align*}
C(\bar x) \coloneqq \left\{ \,
d \in \mathbb{R}^n : 
\inner{\grad f(\bar x)}{d} = 0, ~
{\rm D}h(\bar x)^\T d = 0, ~
{\rm D}G (\bar x)[d] \in T_{\SDP}(G(\bar x))
\, \right\},
\end{align*}
where 
\begin{align*}
T_{\SDP}(G(\bar x)) \coloneqq \left\{
\, D \in \sym :
\begin{gathered}
\exists \{ D_k \} \subset \sym, \, \exists \{ \alpha_{k} \} \subset \R_{+}, \, \lim_{k \to \infty} D_k = D, \, \lim_{k\to \infty} \alpha_k = 0,
\\
G(\bar x) + \alpha_k D_k \in \SDP \quad \forall k \in \mathbb{N}
\end{gathered}
\, \right\}.
\end{align*}

\begin{definition}~\label{def: BSONC}
We say $\bar x \in \cal{F}$ satisfies BSONC if for every $d \in C(\bar x)$, there are Lagrange multipliers $({\bar \mu_d}, {\bar\Omega_d})\in  \mathbb{R}^p \times  \sym_+$ such that $(\bar x, {\bar \mu_d}, {\bar\Omega_d})$ is a KKT triplet and 
\begin{align*}
d^\T(\nabla_{xx}^2 L(\bar x,{\bar \mu_d},{\bar \Omega_d}) + \sigma(\bar x,\bar \Omega_d))d\geq 0,
\end{align*}
where $C(\bar x)$ is the critical cone of \eqref{NSDP} at $\bar x$.
\end{definition}
\noindent
BSONC is not practical, since it is difficult to verify its conditions. In fact, the Lagrange multiplier pair $(\bar{\mu}, \bar{\Omega})$ depends on $d \in C(\bar{x})$.
To overcome this drawback of BSONC, the weak second-order necessary condition (WSONC) was proposed for NSDP in~\cite{fukuda2023weak}. 
\par
To this end, we first introduce the weak constant rank (WCR) property, originally proposed in the context of NLP in~\cite{ams07} and later extended to NSDP in~\cite{fukuda2023weak}. This condition plays an important role in WSONC.

\begin{definition}\label{def: wcr}
We say that \( \bar{x} \) satisfies the WCR condition if \( \bar{x} \in \mathcal{F} \) and there exists a neighborhood~\( \mathcal{N} \) of \( \bar{x} \) such that the set
\begin{align*}
\left\{ \bar{v}_{ij}(x) : 1 \leq i \leq j \leq |\bar{\beta}| \right\} \cup \big\{ \nabla h_i(x) : i \in \{1, \dots, p\} \big\}
\end{align*}
has constant rank for all \( x \in \mathcal{N} \), where
\begin{align*}
\bar{v}_{ij}(x) \coloneqq \left[ \bar{u}_i(x)^\top \partial_\ell G(x)\, \bar{u}_j(x) \right]_{\ell \in \{1, \dots, n\}},
\end{align*}
and $\bar{u}_1(x), \dots, \bar{u}_{|\bar{\beta}|}(x) \in \mathbb{R}^m$ are the columns of $\mathcal{U}_{\bar{\beta}}(G(x))$, where $\mathcal{U}_{\bar{\beta}}(G(x))$ is defined in Lemma~{\rm \ref{sdp:lemma-bs}}.
\end{definition}
\noindent
Note that WCR is not, by itself, a CQ, see~\cite[Example~5.1]{ams07}. However, Robinson’s CQ and the WCR property are sufficient to ensure that the following WSONC holds at local minimizers of~\eqref{NSDP}~\cite[Theorem~11]{fukuda2023weak}.

\begin{definition}~\label{def: WSOC}
Let $\bar{x}$ be a KKT point associated with some Lagrange multipliers $(\bar \mu, \bar \Omega) \in  \mathbb{R}^p \times \sym_+$. We say that WSONC holds at $\bar x$ when 
\begin{align*} 
d^\T(\nabla_{xx}^2 L(\bar x,\bar \mu,\bar \Omega) + \sigma(\bar x,\bar \Omega))d\geq 0 \quad \forall d \in S(\bar x),
\end{align*}
where the critical subspace is defined as 
\begin{align*}
S(\bar x)\coloneqq \left\{
\, d \in \R^n : {\rm D}h(\bar x) d = 0, \, U_{\bar \beta}^\T {\rm D} G (\bar x)[d]) U_{\bar \beta} = 0 \,
\right\}.
\end{align*}
and $U_{\bar{\beta}}$ is described in~\eqref{eq: decomp_G_sdp}.
\end{definition}
\noindent
Note that the critical subspace $S(\bar x)$ is equal to the largest linear subspace contained in the critical cone $C(\bar{x})$. In particular, we have the identity
\begin{align*}
S(\bar x) = C(\bar x) \cap (-C(\bar x)).
\end{align*}
An important feature of WSONC is that, unlike BSONC, the associated Lagrange multipliers are independent of the direction $d \in S(\bar x)$. This simplifies the verification of whether WSONC holds.


\section{Second-order sequential optimality conditions}\label{sec:AKKT2}

In this section, we extend the definition of sequential second-order optimality conditions, originally developed for NLP, to NSDP. As discussed earlier, a suitable condition for this context should satisfy the following three properties:
\begin{enumerate}[(i)]
    \item it {is} a genuine necessary condition that does not rely on any assumption;
    \item it implies WSONC under Robinson's CQ and WCR;
    \item it can be verified via sequences produced by practical algorithms.
\end{enumerate}
From now on, we denote by ${\cal U}_{\bar{\beta}} \colon {\cal N} \to \R^{m \times |\bar{\beta}|}$ the analytic matrix function described in Lemma~\ref{sdp:lemma-bs}. We now propose the AKKT2 conditions, which are sequential second-order optimality for NSDP.

\begin{definition}\label{def:akkt2}
We say that $\bar x \in \R^n$ satisfies the AKKT\emph{2} conditions if $\bar x \in \F$ and there exist $\{ (x^k, \mu^k, \Omega_k, \varepsilon_k) \} \subset \mathbb{R}^n \times \mathbb{R}^p \times \sym_+ \times (0,  \infty)$ and $\bar{n} \in \N$ such that $\{ (x^k, \mu^k, \Omega^k) \}$ is an AKKT sequence, $x^k \to \bar{x}$ and $\varepsilon_k \to 0 ~ (k \to \infty)$, and
\begin{align*}
\forall k \geq \bar{n}, \quad \forall d \in S(x^k, \bar x), \quad d^{\top}(\nabla_{xx}^2 L(x^k, \mu^k,  \Omega_k) & +\sigma(x^k, \Omega_k) ) d  \geq -\varepsilon_k\|d\|^2,
\end{align*}
where $S(x^k, \bar x)$ is the perturbed critical subspace defined as 
\begin{align*}
S(x^k, \bar x) \coloneqq \left\{ \, d \in \mathbb{R}^n \colon {\rm D} h(x^k) d=0, \, \mathcal{U}_{\bar \beta}(G(x^k))^\top {\rm D} G( x^k) [d] \, \mathcal{U}_{\bar \beta}(G(x^k)) = 0 \, \right\}.
\end{align*}
\end{definition}

We refer to a point satisfying the conditions of Definition~\ref{def:akkt2} as an AKKT2 point, and a sequence \( \{(x^k, \mu^k, \Omega_k, \varepsilon_k) \} \) that satisfies the same conditions as an AKKT2 sequence corresponding to $\bar{x}$. The AKKT2 was originally proposed in the context of NLP in~\cite{akkt2}, and an equivalent characterization was provided in~\cite[Theorem~3.2]{Birgin2016a}. The AKKT2 conditions for problem~\eqref{NSDP} generalize this characterization by incorporating a perturbed critical subspace. Notice that, for any $x^k$ sufficiently close to $\bar x$ such that $G(x^k) \in \mathcal{N}$, $\mathcal{U}_{\bar \beta} (G(x^k))$ is well-defined by the continuity of $G$, and so is $S(x^k, \bar x)$, where $\mathcal{N}$ is a small neighborhood of $G(\bar x)$ described in Lemma~\ref{sdp:lemma-bs}. Furthermore, it is worth noting that the condition 
\begin{align*}
U_{\bar{\beta}}^\top {\rm D}G(\bar{x})[d] U_{\bar{\beta}} = 0
\end{align*}
in the critical subspace $S(\bar x)$ from WSONC, which characterizes the ``active'' conic constraints at~\( \bar{x} \), is naturally approximated by the perturbed expression
\begin{align*}
\mathcal{U}_{\bar{\beta}}(G(x^k))^\top {\rm D}G(x^k)[d]\, \mathcal{U}_{\bar{\beta}}(G(x^k)) = 0.
\end{align*}
This motivates the interpretation of \( S(x^k, \bar{x}) \) as a perturbed version of the critical subspace \( S(\bar{x}) \) that appears in WSONC.

Moreover, the CAKKT2 conditions can be formulated by replacing the AKKT conditions in Definition~\ref{def:akkt2} with the CAKKT conditions. This extension was previously proposed for NLP in~\cite{cakkt2}.

\begin{definition}\label{def:cakkt2}
We say that $\bar x \in \R^n$ satisfies the CAKKT\emph{2} conditions if $\bar x \in \F$ and there exist $\{(x^k, \mu^k, \Omega_k, \varepsilon_k) \}\subset \mathbb{R}^n \times  \mathbb{R}^p \times \sym_+ \times (0,  \infty)$ and $\bar{n} \in \N$ such that $\{ (x^k, \mu^k, \Omega_k) \}$ is a CAKKT sequence, $x^k \to \bar{x}$ and $\varepsilon_k \to 0~(k \to \infty)$, and 
\begin{align*}
    \forall k \geq \bar{n}, \quad \forall d \in S(x^k, \bar x), \quad d^{\top}(\nabla_{xx}^2 L(x^k, \mu^k,  \Omega_k)  +\sigma(x^k, \Omega_k) ) d  \geq -\varepsilon_k\|d\|^2.
\end{align*}
\end{definition}

\noindent 
A point that satisfies the conditions of Definition~\ref{def:cakkt2} is called  CAKKT2 point, and we refer the sequence \( \{(x^k, \mu^k, \Omega_k, \varepsilon_k)\} \) as an CAKKT2 sequence corresponding to $\bar{x}$. To prove the results associated with these conditions, we need the following lemma.

\begin{lemma} \label{lm: difference_sigma_and_subdifferential_SAKKT2}
Suppose $\bar{x}\in\F$ and $G(\bar x)$ has the decomposition defined by~\eqref{eq: decomp_G_sdp}. 
Let $\{x_k \} \subset \R^n $ and $\{\rho_k \} \subset \R_+$ be sequences such that $x^k \to \bar x$ and $\rho_k \to +\infty$ as $k \to \infty$. 
Then there exists $r > 0$ such that for all $x^k \in \mathbb{B}(\bar x, r)$ {and $V_k \in \partial \Pi_{\mathbb{S}_{+}^{m}}(-G(x^k))$},
\begin{align*}
d^\T \sigma(x^k ,\Omega_k) d \geq \rho\Bigl\langle {\rm D}G(x^k )[d],\, V_k\bigl[{\rm D}G(x^k )[d]\bigr] \Bigr\rangle \quad \forall d\in 
S(x^k,\bar{x}),
\end{align*}
where $\Omega_k \coloneqq \rho_k \projs(-G(x^k ))$.
\end{lemma}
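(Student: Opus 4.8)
The plan is to work entirely in the spectral basis of $G(x^k)$ and to reduce both sides of the claimed inequality to explicit quadratic forms in the entries of $\tilde{H} \coloneqq U^\T H U$, where $H \coloneqq {\rm D}G(x^k)[d]$ and $G(x^k) = U \Lambda U^\T$ is the spectral decomposition as in~\eqref{eq: decomp_sym}, with index sets $\alpha,\beta,\gamma$ collecting the positive, zero, and negative eigenvalues of $G(x^k)$. First I would rewrite the left-hand side: by the definition of the sigma-term and bilinearity, $d^\T\sigma(x^k,\Omega_k)d = 2\langle \Omega_k,\, H\, G(x^k)^{\dagger} H\rangle$. Substituting $\Omega_k = \rho_k\projs(-G(x^k)) = \rho_k\, U\,\D(0,0,-\Lambda_\gamma)U^\T$ together with $G(x^k)^{\dagger} = U\,\D(\Lambda_\alpha^{-1},0,\Lambda_\gamma^{-1})U^\T$, a direct trace computation yields
\[
d^\T\sigma(x^k,\Omega_k)d \;=\; 2\rho_k\Bigl(\tr{|\Lambda_\gamma|\,\tilde{H}_{\gamma\alpha}\Lambda_\alpha^{-1}\tilde{H}_{\alpha\gamma}} + \tr{|\Lambda_\gamma|\,\tilde{H}_{\gamma\gamma}\Lambda_\gamma^{-1}\tilde{H}_{\gamma\gamma}}\Bigr),
\]
where $|\Lambda_\gamma| \coloneqq -\Lambda_\gamma \succ 0$.

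The decisive step is to exploit the membership $d \in S(x^k,\bar x)$. I would first choose $r>0$ small enough that, for every $x^k \in \mathbb{B}(\bar x, r)$, one has $G(x^k)\in\mathcal{N}$ (the neighborhood of Lemma~\ref{sdp:lemma-bs}) and, by continuity of the eigenvalues, the $|\bar\alpha|$ largest eigenvalues of $G(x^k)$ remain strictly positive and bounded away from $0$. Consequently every nonpositive eigenvalue of $G(x^k)$, i.e.\ every index in $\beta\cup\gamma$, lies among the $|\bar\beta|$ smallest, so by Lemma~\ref{sdp:lemma-bs} the span of the columns of $\mathcal{U}_{\bar\beta}(G(x^k))$ contains $\operatorname{range}(U_\beta)\oplus\operatorname{range}(U_\gamma)$. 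Since the defining equation $\mathcal{U}_{\bar\beta}(G(x^k))^\T H\,\mathcal{U}_{\bar\beta}(G(x^k)) = 0$ is equivalent to $PHP = 0$ for the orthogonal projector $P$ onto that span, it follows that $\tilde{H}_{\beta\beta} = \tilde{H}_{\beta\gamma} = \tilde{H}_{\gamma\gamma} = 0$. Feeding these vanishing blocks into the previous display collapses the left-hand side to
\[
d^\T\sigma(x^k,\Omega_k)d \;=\; 2\rho_k\,\tr{|\Lambda_\gamma|\,\tilde{H}_{\gamma\alpha}\Lambda_\alpha^{-1}\tilde{H}_{\alpha\gamma}} \;=\; 2\rho_k\sum_{i\in\gamma,\,j\in\alpha}\frac{|\lambda_i|}{\lambda_j}\,\tilde{H}_{ij}^2 .
\]

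For the right-hand side I would invoke Proposition~\ref{cor:qi} with $M = G(x^k)$: for any $V_k\in\partial\projs(-G(x^k))$ the matrix $V_k[H]$ has the stated block form, and because the only freedom lives in the $\beta\beta$ block while $\tilde{H}_{\beta\beta}=0$, and moreover $\tilde{H}_{\beta\gamma}=\tilde{H}_{\gamma\gamma}=0$, the inner product collapses (using that $U$ preserves $\langle\cdot,\cdot\rangle$) to
\[
\langle H,\, V_k[H]\rangle \;=\; 2\sum_{i\in\alpha,\,j\in\gamma}\mathcal{B}_{ij}\,\tilde{H}_{ij}^2, \qquad \mathcal{B}_{ij} \coloneqq \mathcal{B}\bigl(\lambda^U(-G(x^k))\bigr)_{ij} = \frac{|\lambda_j|}{\lambda_i + |\lambda_j|}\quad (i\in\alpha,\ j\in\gamma),
\]
the last equality following from~\eqref{matrixB} after noting $\lambda_i(-M)<0$ for $i\in\alpha$ and $\lambda_j(-M)>0$ for $j\in\gamma$. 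Relabeling indices so that both sums run over $i\in\alpha,\ j\in\gamma$ and using the symmetry $\tilde{H}_{ij}=\tilde{H}_{ji}$, the claimed inequality reduces to the termwise estimate $\tfrac{|\lambda_j|}{\lambda_i}\ge \tfrac{|\lambda_j|}{\lambda_i+|\lambda_j|}$, which is immediate because $\lambda_i>0$ and $|\lambda_j|\ge 0$; multiplying through by $2\rho_k$ gives the desired bound for every admissible $V_k$.

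The genuinely delicate point is the eigenvalue-perturbation argument underlying the choice of $r$ and the inclusion $\operatorname{range}(U_\beta)\oplus\operatorname{range}(U_\gamma)\subseteq\operatorname{range}(\mathcal{U}_{\bar\beta}(G(x^k)))$: one must combine the uniform separation of the $|\bar\alpha|$ large eigenvalues from $0$ over $\mathbb{B}(\bar x,r)$ with the characterization of $\mathcal{U}_{\bar\beta}$ in Lemma~\ref{sdp:lemma-bs}, and handle possible eigenvalue multiplicity by reasoning with eigenspaces rather than individual eigenvectors. Once the three blocks are shown to vanish, the trace computations and the final termwise comparison are routine.
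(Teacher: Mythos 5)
Your proposal is correct and takes essentially the same route as the paper's proof: both work in the spectral basis of $G(x^k)$, use Lemma~\ref{sdp:lemma-bs} to show that $d \in S(x^k,\bar x)$ annihilates the relevant blocks of $\tilde H$, invoke Proposition~\ref{cor:qi} to express $V_k[H]$, and finish with the same termwise coefficient comparison $\tfrac{|\lambda_j|}{\lambda_i} \ge \tfrac{|\lambda_j|}{\lambda_i+|\lambda_j|}$. The only differences are bookkeeping: you partition the spectrum of $G(x^k)$ directly into positive/zero/negative index sets and phrase the constraint via the orthogonal projector onto $\operatorname{range}(\mathcal{U}_{\bar\beta}(G(x^k)))$, whereas the paper refines the positive block into $\bar\alpha$ and $\alpha^k_-$ and uses a change-of-basis matrix $C$ — neither change affects the substance of the argument.
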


\begin{proof}
    Let \( r > 0 \) be {a constant such that any} \( x^k \in \mathbb{B}(\bar{x}, r) \) {satisfies} the following conditions:  
(i) \( G(x^k ) \in \mathcal{N} \), where \( \mathcal{N} \) is the neighborhood of \( G(\bar{x}) \) defined in Lemma~\ref{sdp:lemma-bs}; and  
(ii) \( G(x^k) \) admits the spectral decomposition
\begin{equation}\label{eq:decomp_G}
 \begin{aligned}
     & G(x^k)  = U(x^k) \Lambda(x^k)  U(x^k)^\T,\\
     & \Lambda(x^k)  \coloneqq \diag[\lambda_1(G(x^k)), \dots, \lambda_m(G(x^k))],\\
     & U(x^k)  = [U(x^k)_{\bar{\alpha}}, \, U(x^k)_{\bar{\beta}}],\\
     & \lambda_1(G(x^k))   \geq  \cdots  \geq \lambda_m(G(x^k)),
\end{aligned}   
\end{equation}
where \( U(x^k) \) is an orthogonal matrix whose columns are ordered eigenvectors corresponding to eigenvalues ordered in non-increasing order, and $\bar \alpha$ and $\bar \beta$ are the index sets of the positive eigenvalues and zero eigenvalues of $G(\bar x)$ defined in~\eqref{eq: decomp_G_sdp}, respectively. Note that $\lambda_{|\bar \alpha|}(G(x)) > 0$ in~\eqref{eq:decomp_G} is guaranteed by the continuity of eigenvalues.
\par
Let $\alpha_-(x^k)$, $\beta(x^k)$, and $\gamma(x^k)$ denote the index subsets of $\bar{\beta}$ corresponding to the possible remaining positive eigenvalues, zero eigenvalues, and the negative eigenvalues of $G(x^k)$, respectively. 
For simplicity, we define $U_k \coloneqq U(x^k)$, $\Lambda_k \coloneqq \Lambda(x^k)$, $\lambda_{k,i} \coloneqq \lambda_i(G(x^k))$ $(i = 1, \dots, m)$, $\alpha^k_- \coloneqq \alpha_-(x^k)$, $\beta^k \coloneqq \beta(x^k)$, and $\gamma^k \coloneqq \gamma(x^k)$. Then, we have $\alpha^k_- \cup \beta^k \cup \gamma^k = \bar \beta$
and $U_{k} = [U_{k, \bar \alpha}, U_{k, \alpha^k_-}, U_{k, \beta^k}, U_{k, \gamma^k}]$. Moreover, by noting~\eqref{eq:decomp_G}, the matrix \( \Lambda_k \) can be written as
\begin{align*}
\Lambda_k = \begin{bmatrix}
\Lambda_{k, \bar{\alpha}} & 0 & 0 & 0 \\
0 & \Lambda_{k, \alpha_-^k} & 0 & 0 \\
0 & 0 & \Lambda_{k, \beta^k} & 0 \\
0 & 0 & 0 & \Lambda_{k, \gamma^k}
\end{bmatrix},
\end{align*}
where $ \mathbb{S}^{|\bar \alpha|} \ni \Lambda (x) _{\bar \alpha} = \diag[\lambda_1 (G(x)), \dots, \lambda_{|\bar \alpha|} (G(x))] \succ 0$, $\mathbb{S}^{|{\alpha_-^k}|} \ni \Lambda_{k,{\alpha_-^k}} \succ 0 $, $\mathbb{S}^{|\beta^k|} \ni \Lambda_{k,{\beta^k}} = 0$, and $\mathbb{S}^{|{\gamma^k}|} \ni \Lambda_{k,{\gamma^k}} \prec 0$.  
\par
Let us take any $d \in S(x^k, \bar x)$. Then, we have $\mathcal{U}_{\bar \beta}(G(x^k))^\top {\rm D} G( x^k) [d] \mathcal{U}_{\bar \beta}(G(x^k)) = 0$. 
Notice that $\mathcal{U}_{\bar \beta}(G(x^k))$ is not necessarily equal to $U_{k, \bar \beta}$ in general.
We denote the colums of $U_{k, \bar \beta}$ by $u_{k, i}$ ($i = |\bar \alpha| + 1, \dots, m$). Since the columns of $\mathcal{U}_{\bar \beta}(G(x^k))$ form an orthogonal basis for the space spanned by the eigenvectors associated with the $|\bar \beta|$ smallest eigenvalues of 
$G(x^k) \in \cal{N}$, i.e., $\operatorname{Im}(\mathcal{U}_{\bar \beta}(G(x^k))) = \operatorname{span}(\{u_{k, |\bar \alpha| + 1},\dots, u_{k, m} \})$, there exists a unique matrix $C \in \R^{|\bar \beta| \times |\bar \beta|}$ such that $U_{k, \bar \beta} = \mathcal{U}_{\bar \beta}(G(x^k)) C$. Then, we obtain 
\begin{align} 
U_{k, \bar \beta}^\top {\rm D} G( x^k) [d] U_{k, \bar \beta} & = (\mathcal{U}_{\bar \beta}(G(x^k)) C)^\top {\rm D} G( x^k) [d] (\mathcal{U}_{\bar \beta}(G(x^k)) C) \nonumber
\\
& = C^\T \mathcal{U}_{\bar \beta}(G(x^k))^\top {\rm D} G( x^k) [d] \mathcal{U}_{\bar \beta}(G(x^k)) C \nonumber
\\
& = 0. \label{eq:active}
\end{align} 
Let $H_{k} \coloneqq {\rm D}G(x^k)[d]$ and $\tilde{H}_k \coloneqq U_k^{\top} H_k U_k$. It then follows from~\eqref{eq:active} that
\begin{equation}
\begin{aligned}
\tilde{H}_{k}  = 
\begin{bmatrix}
\tilde{H}_{k,{\bar \alpha}{\bar \alpha}} & \tilde{H}_{k, \bar \alpha\alpha^k_-} & \tilde{H}_{k, \bar \alpha \beta^k} & \tilde{H}_{k, \bar \alpha\gamma^k}\\ 
\tilde{H}_{k, \bar \alpha\alpha^k_-}^\T & 0 & 0 & 0\\
\tilde{H}_{k, \bar \alpha \beta^k}^\T & 0 & 0 & 0\\ 
\tilde{H}_{k, \bar \alpha\gamma^k}^\T & 0 & 0 & 0\\ 
\end{bmatrix}.
 \end{aligned}
\label{eq:tildeH_AKKT2}
\end{equation}
Using~\eqref{eq:tildeH_AKKT2} and $\Omega_k = \rho_k \projs(-G(x^k))$ derives 
\begin{align}
{U_k^{\top} G(x^k)^{\dagger} H_k \Omega_k U_k} 
& = 
\begin{bmatrix}
\Lambda_{k, \bar \alpha}^{-1} & 0 & 0 & 0\\ 
0 & \Lambda_{k, \alpha^k_- }^{-1} & 0 & 0\\
0 & 0 & 0 & 0\\
0 & 0 & 0 & \Lambda_{k, \gamma^k }^{-1}
\end{bmatrix} 
U_k^\T U_k \tilde{H}_k U_k^\T U_k
\begin{bmatrix}
    0 & 0 & 0 & 0
\\
0 & 0 & 0 & 0
\\
0 & 0 & 0 & 0
\\
0 & 0 & 0 & -\Lambda_{k, \gamma^k}
\end{bmatrix} \nonumber
\\
& =
\begin{bmatrix}
0 & 0 & 0 & - \Lambda_{k, \bar \alpha}^{-1} \tilde{H}_{k, \bar \alpha \gamma^k}\Lambda_{k, \gamma^k} \\ 
0 & 0 & 0 & 0\\
0 & 0 & 0 & 0\\
0 & 0 & 0 & 0\\
\end{bmatrix}. \label{eq:tildeH_AKKT2prime}
\end{align}
Meanwhile, we have
\begin{align*}
 - \Lambda_{k, \bar \alpha }^{-1} \tilde{H}_{k,\bar \alpha\gamma^k}\Lambda_{k, \gamma^k } = \mathcal{A}_{k, \bar \alpha\gamma^k} \odot \tilde{H}_{k,\bar \alpha\gamma^k},
\end{align*}
where $\mathcal{A}_k \in \R^{m \times m}$ is defined as
\begin{gather} \label{eq:matrixA}
\mathcal{A}_{k,ij} \coloneqq \left\{
\begin{array}{ll}
\displaystyle -\frac{\lambda_{k, j}}{\lambda_{k, i}} & {\rm if}~ i\in \bar \alpha, j \in \gamma^k
\\
0 & \text{otherwise}.
\end{array}
\right.
\end{gather}
Combining~\eqref{eq:tildeH_AKKT2},~\eqref{eq:tildeH_AKKT2prime},~\eqref{eq:matrixA}, and the definition of the sigma-term yields
\begin{align}
d^{\top} \sigma(x^k, \Omega_k) d &= 2 \big\langle H_k, G(x^k)^{\dagger} H_k \Omega_k \big\rangle \nonumber
\\
&= 2 \rho_k \langle \tilde{H}_k, {U_k^{\top} G(x^k)^{\dagger} H_k \Omega_k U_k}\rangle \nonumber
\\
&= 2 \rho_k \Big\langle \tilde{H}_k, 
\begin{bmatrix}
0 & 0 & 0 & \mathcal{A}_{k, \bar \alpha\gamma^k} \odot \tilde{H}_{k,\bar \alpha\gamma^k} \\ 
0 & 0 & 0 & 0\\
0 & 0 & 0 & 0\\
0 & 0 & 0 & 0\\
\end{bmatrix}
\Big\rangle . \label{eq:dsigmad_AKKT2}
\end{align}
On the other hand, by Proposition~\ref{cor:qi}, we have
\begin{align} 
& U_k^{\top} V_k[H_k] U_k \nonumber
\\
& =
\begin{bmatrix}
0 & 0 & 0 & \mathcal{B}_{k,\bar \alpha\gamma^k }  \odot \tilde{H} _{k,{\bar \alpha}\gamma^k }
\\
0 & 0 & 0 &  \mathcal{B}_{k, {\alpha^k_-}\gamma^k }\odot \tilde{H} _{k, {\alpha^k_-}\gamma^k }
\\
0 & 0 & V_{k, |\beta^k|}[\tilde{H}_{k, \beta^k \beta^k} ] & \tilde{H}_{k, \beta^k \gamma^k } 
\\
\mathcal{B}_{k,\bar \alpha\gamma^k }^\T \odot \tilde{H}_{k, {\bar \alpha}\gamma^k}^\T & \mathcal{B}_{k, {\alpha^k_-}\gamma^k }^\T  \odot \tilde{H}_{k, {\alpha^k_-}\gamma^k }^\T &  \tilde{H}_{k,\beta^k \gamma^k }^\T &  \tilde{H}_{k, \gamma^k \gamma^k}
\end{bmatrix}
, \label{subderivative_AKKT2}
\end{align}
where $V_{k, |\beta^k|} \in \partial \Pi_{\mathbb{S}_{+}^{|\beta^k|}}(0)$ and $\mathcal{B}(\lambda^{U_k}({-} G(x^k)))$ is the matrix defined in~\eqref{matrixB}. Note that the constructions of $\bar{\alpha}$ and $\gamma^k$ imply
\begin{equation} \label{eq:matrixB}
\mathcal{B}_{k,ij}  = - \frac{\lambda_{k, j}}{\lambda_{k, i} - \lambda_{k, j}} \quad  \forall i\in \bar \alpha, \,  \forall j \in \gamma^k.
\end{equation}
Recall that $\tilde{H} _{k, {\alpha^k_-}\gamma^k } = 0$, $\tilde{H}_{k, \beta^k \beta^k}$ = 0, $\tilde{H}_{k, \beta^k \gamma^k} = 0$ and $\tilde{H}_{k, \gamma^k \gamma^k } = 0$ hold from~\eqref{eq:tildeH_AKKT2}. Then, utilizing~\eqref{subderivative_AKKT2} leads to
\begin{align*} 
U_k ^\T V_k[H_k] U_k 
= 
\begin{bmatrix}
0 & 0 & 0 & \mathcal{B}_{k,\bar \alpha\gamma^k }  \odot \tilde{H} _{k,{\bar \alpha}\gamma^k }
\\
0 & 0 & 0 & 0
\\
0 & 0 & 0 &0
\\
\mathcal{B}_{k,\bar \alpha\gamma^k }^\T \odot \tilde{H}_{k, {\bar \alpha}\gamma^k}^\T & 0 & 0 & 0 
\end{bmatrix}.
\end{align*}
Consequently, it is clear that
\begin{align}
\rho_k \big\langle H_k, V_k[H_k] \big\rangle  
&= \rho_k \Big\langle \tilde{H}_k, U_k^{\top} V_k[H_k] U_k \Big\rangle \nonumber 
\\
&= 2\rho_k \left\langle \tilde{H}_k, 
\begin{bmatrix}
0 & 0 & 0 & \mathcal{B}_{k,\bar \alpha\gamma^k }  \odot \tilde{H} _{k,{\bar \alpha}\gamma^k }
\\
0 & 0 & 0 & 0
\\
0 & 0 & 0 &0
\\
0 & 0 & 0 & 0
\end{bmatrix} \right\rangle. \label{eq:rhoHVH_AKKT2}
\end{align}
Exploiting~\eqref{eq:dsigmad_AKKT2} and~\eqref{eq:rhoHVH_AKKT2} yields
\begin{align}
& d^{\top} \sigma(x^k, \Omega_k) d -  \rho_k\Bigl\langle {\rm D}G(x^k)[d],\, V_k\bigl[{\rm D}G(x^k)[d]\bigr] \Bigl\rangle \nonumber
\\
&= 2 \rho_k \left\langle \tilde{H}_k, 
\begin{bmatrix}
0 & 0 & 0 & (\mathcal{A}_{k,\bar \alpha\gamma^k } - \mathcal{B}_{k,\bar \alpha\gamma^k }) \odot \tilde{H}_{k,\bar \alpha\gamma^k} \\ 
0 & 0 & 0 & 0\\
0 & 0 & 0 & 0\\
0  & 0 & 0 & 0\\
\end{bmatrix}
\right\rangle \nonumber \\
&= 2\Big\langle \tilde{H}_{k, \bar \alpha \gamma^k}, \mathcal{R}_k \odot \tilde{H}_{k,\bar \alpha\gamma^k} \Big\rangle \nonumber \\
&= 2\sum_{i\in \bar \alpha } \sum_{j \in \gamma^k}
\mathcal{R}_{k, ij} \tilde{H}_{k, ij}^2,
\label{eq:dHd_rhoHDQH}
\end{align}
where $\mathcal{R}_k \in \R^{ m \times m}$ is defined as $\mathcal{R}_k \coloneqq \rho_k(\mathcal{A}_k - \mathcal{B}_k)$. By~\eqref{eq:matrixA} and~\eqref{eq:matrixB}, we conclude that for any $i \in \bar{\alpha}$ and $j \in \gamma^k$, 
\begin{equation*}
\mathcal{R}_{k,ij} = \frac {\rho_k\lambda^{2}_{k, j}}{ \lambda^{2}_{k, i} - \lambda_{k, i}  \lambda_{k, j} } \geq 0.
\end{equation*}
Hence, this inequality and~\eqref{eq:dHd_rhoHDQH} imply the desired result.
\end{proof}

\begin{lemma}
Let $\bar{x}\in\F$, and let $\{x^k\}\subset\R^n$ be a sequence such that $x^k\to\bar{x}$ as $k \to \infty$. Suppose that the mapping $x \mapsto S(x^k, \bar x)$ is inner semicontinuous at $\bar x$. Then, for each $d \in S(\bar x)$, there is a sequence $\{ d^k \} \subset S(x^k, \bar x)$ such that $d^k \to d$ as $k \to \infty$. Moreover, the mapping $x \mapsto S(x, \bar x)$ is inner semicontinuous at $\bar x$ if and only if the WCR property holds at $\bar x$.
\label{lm: inner_semicontinuous}
\end{lemma}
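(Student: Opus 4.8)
The plan is to handle the two assertions separately: the first follows directly from unwinding the definition of inner semicontinuity, while the ``moreover'' equivalence reduces, after a reformulation, to a standard fact about kernels of continuously varying matrices. For the first assertion I would record that evaluating $S(\cdot,\bar x)$ at $x=\bar x$ recovers $S(\bar x)$. Indeed, $\mathcal{U}_{\bar\beta}(G(\bar x))=U_{\bar\beta}$ by Lemma~\ref{sdp:lemma-bs}, so the two defining conditions of $S(\bar x,\bar x)$ coincide with those of $S(\bar x)$. Writing $\Phi(x):=S(x,\bar x)$, we have $\Phi(\bar x)=S(\bar x)$, and the claim that every $d\in S(\bar x)$ is a limit of some $d^k\in S(x^k,\bar x)$ is exactly the sequential characterization of inner semicontinuity of $\Phi$ at $\bar x$ applied to the given sequence $x^k\to\bar x$. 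Hence this part is immediate once the identification $\Phi(\bar x)=S(\bar x)$ is in place.

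The substance is the equivalence, and the key step is to recognize $S(x,\bar x)$ as the kernel of a single matrix assembled from the WCR generators. Expanding ${\rm D}G(x)[d]=\sum_{\ell=1}^n\partial_\ell G(x)\,d_\ell$, the $(i,j)$-entry of $\mathcal{U}_{\bar\beta}(G(x))^\T{\rm D}G(x)[d]\,\mathcal{U}_{\bar\beta}(G(x))$ equals $\sum_\ell \bar u_i(x)^\T\partial_\ell G(x)\bar u_j(x)\,d_\ell=\langle \bar v_{ij}(x),d\rangle$, where $\bar u_1(x),\dots,\bar u_{|\bar\beta|}(x)$ are the columns of $\mathcal{U}_{\bar\beta}(G(x))$ and $\bar v_{ij}(x)$ is exactly the vector from Definition~\ref{def: wcr}. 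As this matrix is symmetric, the condition $\mathcal{U}_{\bar\beta}(G(x))^\T{\rm D}G(x)[d]\,\mathcal{U}_{\bar\beta}(G(x))=0$ is equivalent to $\langle \bar v_{ij}(x),d\rangle=0$ for all $1\le i\le j\le|\bar\beta|$. Together with $\nabla h_i(x)^\T d=0$, this yields $S(x,\bar x)=\ker A(x)$, where $A(x)$ is the matrix whose rows are $\nabla h_i(x)^\T$ $(i=1,\dots,p)$ and $\bar v_{ij}(x)^\T$ $(1\le i\le j\le|\bar\beta|)$. By Lemma~\ref{sdp:lemma-bs}, $A$ is (analytic, hence) continuous on a neighborhood of $\bar x$, and the WCR property at $\bar x$ is precisely the local constancy of $\operatorname{rank}A(x)$. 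Thus the equivalence reduces to the assertion that $x\mapsto\ker A(x)$ is inner semicontinuous at $\bar x$ if and only if $\operatorname{rank}A(\cdot)$ is constant near $\bar x$.

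I would prove this reduced statement with the usual rank/kernel dictionary. Since $A$ is continuous, $\operatorname{rank}A(\cdot)$ is lower semicontinuous, so $\operatorname{rank}A(x)\ge\operatorname{rank}A(\bar x)$ and therefore $\dim\ker A(x)\le\dim\ker A(\bar x)$ near $\bar x$. For the ``if'' direction, constant rank makes the Moore--Penrose inverse $A(\cdot)^\dagger$ continuous, hence the orthogonal projector $P(x)=I_n-A(x)^\dagger A(x)$ onto $\ker A(x)$ is continuous; given $d\in\ker A(\bar x)$ and $x^k\to\bar x$, the choice $d^k:=P(x^k)d\in\ker A(x^k)$ satisfies $d^k\to P(\bar x)d=d$, which is inner semicontinuity. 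For the ``only if'' direction, take a basis $d_1,\dots,d_s$ of $\ker A(\bar x)$; inner semicontinuity furnishes $d_i^k\in\ker A(x^k)$ with $d_i^k\to d_i$, and since linear independence is an open condition the $d_i^k$ remain independent for large $k$, forcing $\dim\ker A(x^k)\ge s$ along every sequence and hence $\dim\ker A(x)\ge s$ on a neighborhood. Combined with the reverse inequality, $\dim\ker A(\cdot)$—equivalently $\operatorname{rank}A(\cdot)$—is constant near $\bar x$, which is WCR.

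The main obstacle I anticipate is the ``if'' direction, specifically justifying that $\ker A(x)$ varies inner semicontinuously under constant rank. The clean route is the continuity of $A(\cdot)^\dagger$ under locally constant rank, a classical perturbation fact; alternatively one can build a continuous local basis of $\ker A(x)$ via a constant-rank column reduction. This is the single place where the constant-rank hypothesis is genuinely used. A secondary point requiring care is that $A$ is only defined where $G(x)\in\mathcal{N}$, so every neighborhood in the argument must be taken inside $\{x:G(x)\in\mathcal{N}\}$, which is itself a neighborhood of $\bar x$ by continuity of $G$.
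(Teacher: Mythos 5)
Your proof is correct, but it takes a genuinely different route from the paper: the paper offers no argument at all for this lemma, deferring the first statement to \cite[Lemma 4.2]{fukuda2025second} and the equivalence with WCR to \cite[Lemma 4]{fukuda2023weak}, whereas you prove everything from scratch. Your reduction is sound. The identification $\mathcal{U}_{\bar\beta}(G(\bar x))=U_{\bar\beta}$ from Lemma~\ref{sdp:lemma-bs} gives $S(\bar x,\bar x)=S(\bar x)$, so the first claim is indeed nothing more than the sequential characterization of inner semicontinuity; and rewriting $S(x,\bar x)=\ker A(x)$, with rows $\nabla h_i(x)^\T$ and $\bar v_{ij}(x)^\T$, correctly turns the WCR property of Definition~\ref{def: wcr} into local constancy of $\operatorname{rank}A(\cdot)$, since the rank of a finite family of vectors is the rank of the matrix stacking them. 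Both directions of the reduced equivalence are then executed correctly: under constant rank, continuity of $A(\cdot)^\dagger$ makes the projector $P(x)=I_n-A(x)^\dagger A(x)$ onto $\ker A(x)$ continuous and $d^k=P(x^k)d$ does the job; conversely, openness of linear independence pushes a basis of $\ker A(\bar x)$ into $\ker A(x^k)$ for large $k$, and combined with lower semicontinuity of the rank of a continuous matrix function this pins the kernel dimension. What each approach buys: the paper's citation is economical and avoids duplicating a result proved verbatim elsewhere, at the cost of making the lemma opaque and dependent on external sources; your version makes the statement self-contained at the cost of roughly a page, and its only imported ingredient --- continuity of the Moore--Penrose pseudoinverse along a constant-rank path --- is classical but should either be given an explicit reference (e.g.\ Stewart's perturbation theory for pseudoinverses) or replaced by your alternative constant-rank column-reduction construction of a continuous local basis of $\ker A(x)$. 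Your closing remark about restricting all neighborhoods to $\{x: G(x)\in\mathcal{N}\}$ is exactly the right care to take, since $\mathcal{U}_{\bar\beta}(G(x))$, and hence $S(x,\bar x)$, is only defined there.
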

\begin{proof}
    The first statement is {ensured by}~\cite[Lemma 4.2]{fukuda2025second}, and the second statement is given in~\cite[Lemma 4]{fukuda2023weak}.
\end{proof}

Since CAKKT implies AKKT~\cite{Santos2021}, it is easy to verify that CAKKT2 is stronger than AKKT2.
These strictness implication relationships inform the strategy of the subsequent sections. In Section~\ref{sec:validity}, we show that CAKKT2 is a necessary optimality condition for NSDP, thereby ensuring that AKKT2 also holds at local minimizers of \eqref{NSDP}. In Section~\ref{sec:strength}, we prove that AKKT2 combined with Robinson's CQ and the WCR property implies WSONC, and thus the stronger variant CAKKT2 also implies WSONC under the same assumptions.


\subsection{Validity of the AKKT2-type conditions}\label{sec:validity}
The following result demonstrates that CAKKT2 is a genuine necessary condition without requiring any assumption.
\begin{theorem}\label{thm:optimalitySCAKKT2}
If $\bar{x}$ is a local minimizer of \eqref{NSDP}, then it satisfies CAKKT\emph{2}.
\end{theorem}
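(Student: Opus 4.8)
The plan is to realize $\bar x$ as a limit of \emph{exact} unconstrained local minimizers of the smoothed penalty function and then extract the second-order inequality from the generalized second-order necessary condition satisfied at each such minimizer. First I would fix a sequence $\rho_k \to \infty$ and invoke Theorem~\ref{thm:minAKKT} to produce $\{x^k\}$ with $x^k \to \bar x$, each $x^k$ a local minimizer of $\Psi_k(x) = f(x) + \rho_k \mathcal{P}(x) + \frac14\|x-\bar x\|^4$, together with $\mu^k := -\rho_k h(x^k)$ and $\Omega_k := \rho_k \projs(-G(x^k))$. Theorem~\ref{thm:minAKKT} already guarantees that $\{(x^k,\mu^k,\Omega_k)\}$ is a CAKKT sequence, so only the second-order estimate in Definition~\ref{def:cakkt2} remains to be verified.

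Since $\nabla\mathcal{P}$ is Lipschitz but not differentiable, $\Psi_k$ is merely $C^{1,1}$, so I would apply the second-order necessary condition for $C^{1,1}$ unconstrained minimizers (Hiriart-Urruty--Strodiot--Nguyen, \cite{Urruty1984}): for each direction $d$ there exists $W\in\partial^2\Psi_k(x^k)$ with $d^\top W d \ge 0$. The next step is to identify the elements of $\partial^2\Psi_k(x^k)$. The $C^2$ summands $f$ and $\frac14\|\cdot-\bar x\|^4$ contribute $\nabla^2 f(x^k)$ and $E_k := \|x^k-\bar x\|^2 I_n + 2(x^k-\bar x)(x^k-\bar x)^\top$; the equality part $\tfrac12\rho_k\|h\|^2$ contributes $-\sum_i \mu^k_i\nabla^2 h_i(x^k) + \rho_k\,\mathrm{D}h(x^k)^\top\mathrm{D}h(x^k)$; and differentiating the conic part $-\rho_k\,\mathrm{D}G(x)^\ast[\projs(-G(x))]$ (whose value is given by Theorem~\ref{thm:fitz}) yields $-\mathrm{D}^2G(x^k)^\ast[\Omega_k]$ plus a term whose quadratic form equals $\rho_k\langle \mathrm{D}G(x^k)[d], V_k[\mathrm{D}G(x^k)[d]]\rangle$ for some $V_k\in\partial\projs(-G(x^k))$, the passage from pointwise Jacobians of $\projs$ to elements of $\partial\projs(-G(x^k))$ being exactly Proposition~\ref{cor:qi} combined with the chain rule for the Clarke subdifferential. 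The first three groups collapse to $\nabla_{xx}^2 L(x^k,\mu^k,\Omega_k)$, so the necessary condition reads
\[
d^\top\nabla_{xx}^2 L(x^k,\mu^k,\Omega_k)d + \rho_k\langle\mathrm{D}G(x^k)[d], V_k[\mathrm{D}G(x^k)[d]]\rangle + \rho_k\|\mathrm{D}h(x^k)d\|^2 + d^\top E_k d \ge 0
\]
for some direction-dependent $V_k = V_{k,d}\in\partial\projs(-G(x^k))$.

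Finally I would restrict to $d\in S(x^k,\bar x)$, which forces $\mathrm{D}h(x^k)d = 0$ and kills the $\rho_k\|\mathrm{D}h(x^k)d\|^2$ term, and then apply Lemma~\ref{lm: difference_sigma_and_subdifferential_SAKKT2}. Because that lemma gives $\rho_k\langle\mathrm{D}G(x^k)[d], V_k[\mathrm{D}G(x^k)[d]]\rangle \le d^\top\sigma(x^k,\Omega_k)d$ for \emph{every} $V_k\in\partial\projs(-G(x^k))$ (in particular for $V_{k,d}$), replacing the $V_k$-term by the sigma-term only enlarges the left-hand side, so that $d^\top(\nabla_{xx}^2 L(x^k,\mu^k,\Omega_k)+\sigma(x^k,\Omega_k))d \ge -d^\top E_k d \ge -3\|x^k-\bar x\|^2\|d\|^2$, using $0\preceq E_k$ and $\|E_k\|\le 3\|x^k-\bar x\|^2$. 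Setting $\varepsilon_k := 3\|x^k-\bar x\|^2 + 1/k > 0$, which tends to $0$, and choosing $\bar n$ so that for $k\ge\bar n$ the point $x^k$ lies in the ball $\mathbb{B}(\bar x,r)$ demanded by Lemma~\ref{lm: difference_sigma_and_subdifferential_SAKKT2}, gives exactly the estimate of Definition~\ref{def:cakkt2}; together with the CAKKT sequence this shows $\bar x$ is a CAKKT2 point.

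The step I expect to be the main obstacle is the rigorous computation of the generalized Hessian of the nonsmooth conic penalty and the identification of its elements with the $V_k$-parametrized family: one must justify, via the chain rule for the Clarke subdifferential together with Proposition~\ref{cor:qi}, that every $W\in\partial^2(\rho_k\mathcal{P})(x^k)$ acts as $d\mapsto -\langle\mathrm{D}^2G(x^k)[d,d],\projs(-G(x^k))\rangle + \rho_k\langle\mathrm{D}G(x^k)[d], V_k[\mathrm{D}G(x^k)[d]]\rangle$ for some $V_k\in\partial\projs(-G(x^k))$, which is precisely what makes Lemma~\ref{lm: difference_sigma_and_subdifferential_SAKKT2} applicable. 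Everything else reduces to bookkeeping with the vanishing perturbation $E_k$ and the defining constraint of the perturbed subspace $S(x^k,\bar x)$.
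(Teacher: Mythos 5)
Your proposal is correct and follows essentially the same route as the paper's proof: realize $\bar x$ as a limit of minimizers of the regularized penalty $\Psi_k$ via Theorem~\ref{thm:minAKKT}, apply the $C^{1,1}$ second-order necessary condition of~\cite{Urruty1984} together with the chain-rule identification $\chi_k = V_k \circ (-{\rm D}G(x^k))$ (Proposition~\ref{cor:qi}), and then absorb the $V_k$-term into the sigma-term on $S(x^k,\bar x)$ using Lemma~\ref{lm: difference_sigma_and_subdifferential_SAKKT2}, with $\varepsilon_k$ of order $3\|x^k-\bar x\|^2$. The only (harmless) deviations are your explicit $+1/k$ in $\varepsilon_k$ to guarantee strict positivity and your explicit remark that the lemma holds uniformly in $V_k$, so the direction-dependence of $V_{k,d}$ causes no difficulty.
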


\begin{proof}
First, we show that the CAKKT conditions hold at $\bar x$.
Let $\{\rho_k\}\subset \R_+ $ be a sequence such that $\rho_k \to \infty$ as $k \to \infty$. From Theorem \ref{thm:minAKKT}, there exists $\{ x^k \} \subset \R^n$ such that $x^k \to \bar{x}$ as $k\to \infty$, $x^k$ is a local minimizer of $\Psi_k$ for any $k \in \mathbb{N}$, and $\{(x^k, \mu^k, \Omega_k)\}$ is a CAKKT sequence, where $\mu^k \coloneqq - \rho_k h(x^k)$ and $\Omega_k \coloneqq \rho_k \Pi_{\sym_+}(-G(x^k))$ for all $k \in \N$.
\par
Now, we define $\varepsilon_k \coloneqq 3\Vert x^{k} - \bar{x} \Vert^{2}$ for all $k \in \N$.
Then, it is clear that $\{ \varepsilon_k \} \subset (0,\infty)$ converges to zero. From now on, we show that $\{(x^k, \mu^k, \Omega_k, \varepsilon_k)\}$ is a CAKKT2 sequence. Let $r \in \R$ be chosen as defined in Lemma~\ref{lm: difference_sigma_and_subdifferential_SAKKT2}. Since $\{ x^k \}$ converges to $\bar{x}$, there exists $\bar{n} \in \N$ such that $x^k \in \mathbb{B}(\bar x, r)$ for all $k \geq \bar{n}$. In what follows, we suppose that $k \geq \bar{n}$. Let $d \in  S(x^k, \bar x)$ be arbitrary. 
Now, we note that $x^k$ is a second-order stationary point of $\Psi_k$ defined by~\eqref{eq:penlty_function}, and recall that $\mu^k = - \rho_k h(x^k)$ and $\Omega_k = \rho_k \Pi_{\sym_+}(-G(x^k))$. Hence~\cite[Theorem~3.1]{Urruty1984} implies that there exists an element \( \chi_k \in \partial (\Pi_{\mathbb{S}^m_+} \circ (-G))(x^k) \) such that
\begin{align}
d^\T \bar{\nabla}^2 \Psi_k(x^k) d = 
&\ d^\T\left(\nabla^2 f(x^k) - \sum_{i=1}^p \mu^k_i \nabla^2 h_i(x^k) - \mathrm{D}^2 G(x^k)^{\ast} \Omega_k \right) d \nonumber
\\
&\ + \rho_k (\mathrm{D}h(x^k) d)^\T \mathrm{D}h(x^k) d - d^\T \left( \mathrm{D}G(x^k)^{\ast} \left[\rho_k \chi_k[d]\right] \right) + d^\T \Delta_k d \geq 0, \label{sdp:akkt2ex}
\end{align}
where $\Delta_k \coloneqq \| x^k - \bar{x} \|^2 I_n + 2(x^k - \bar{x})(x^k - \bar{x})^\T$, and $\bar{\nabla}^2 \Psi_k(x^k)$ represents a specific element of the generalized Hessian $\partial^2 \Psi_k(x^k)$, which is defined by $\chi_k$. From~\cite[Theorem 5.1]{paleszeidan} and Proposition~\ref{cor:qi}, there exists some $V_k\in \partial \Pi_{\mathbb{S}^m_+} (-G(x^k))$ such that $\chi_k=V_k\circ (-{\rm D}G(x^k))$. Moreover, we recall that $\mathrm{D}h(x^k) d = 0$ from $d \in S(x^k, \bar{x})$. Then, rewriting \eqref{sdp:akkt2ex} by using $V_k$, $\mu^k$, and $\Omega_k$ yields
\begin{align*}
d^\T \bar{\nabla}^2 \Psi_k(x^k)d
= d^\T \nabla^2_{xx} L(x^k, \mu^k, \Omega_k)d +\rho_k\left\langle {\rm D}G(x^k)[d],V_k\left[{\rm D}G(x^k) [d]\right]\right\rangle + d^\T\Delta_k d \geq 0.
\end{align*}
The above inequality implies
\begin{align*} 
d^\T \nabla^2_{xx} L(x^k, \mu^k, \Omega_k)d 
& \geq - \rho_k\left\langle {\rm D}G(x^k)[d],V_k\left[{\rm D}G(x^k) [d]\right]\right\rangle -d^\T\Delta_k d \nonumber
\\
& \geq - \rho_k\left\langle {\rm D}G(x^k)[d],V_k\left[{\rm D}G(x^k)[d]\right]\right\rangle - 3 \| x^k - \bar{x} \|^2 \|d \|^2,
\end{align*}
where the second line follows from the Cauchy-Schwarz inequality. 
Note that $x^k \in \mathbb{B}(\bar x, r)$ holds. It then follows from Lemma~\ref{lm: difference_sigma_and_subdifferential_SAKKT2} and the definition of $\varepsilon_k$ that
\begin{align*} 
& d^{\top}\left(\nabla_{xx}^2 L(x^k, \mu^k,  \Omega_k)  +\sigma(x^k, \Omega_k) \right) d
\\
& \geq d^\T\sigma(x^k, \Omega_k) d - \rho_k\left\langle {\rm D}G(x^k)[d],V_k\left[{\rm D}G(x^k)[d]\right]\right\rangle - 3 \Vert x^k - \bar{x} \Vert^2 \Vert d \Vert^{2} \geq -\varepsilon_k\|d\|^2. 
\end{align*}
Therefore, the proof is completed.
\end{proof}


\subsection{Strength of the AKKT2-type conditions} \label{sec:strength}
In this section, we establish the relationship between AKKT2 and WSONC. We first provide the relation between AKKT and KKT under Robinson's CQ, which has been shown in the context of nonlinear symmetric cone optimization. 
\begin{proposition} {\rm \cite[Theorem 3.3]{andreani2024optimality}}
Let $\bar x$ be an AKKT point that satisfies Robinson's CQ. Let $\{ (x^k, \mu^k, \Omega_k ) \}$ be an AKKT sequence corresponding to $\bar{x}$. Then, there exist $\bar{\mu} \in \R^p$, $\bar{\Omega} \in \sym$, and $\{ k_j \} \subset \N$ such that $\mu^{k_j} \to \mu$ and $\Omega_{k_j} \to \bar{\Omega}$ as $j \to \infty$ and $(\bar{x}, \bar{\mu}, \bar{\Omega})$ is a KKT point.
\label{prop:akktAndRImpliesKKT}
\end{proposition}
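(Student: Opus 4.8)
The plan is to show that the multiplier sequence $\{(\mu^k,\Omega_k)\}$ is bounded, extract a convergent subsequence, and pass to the limit in the AKKT relations to recover the KKT conditions. Feasibility $\bar x\in\F$ is inherited directly from the definition of AKKT, and once a convergent subsequence of multipliers is available the stationarity and complementarity parts of KKT follow by continuity. The entire difficulty is concentrated in the boundedness step, which is precisely where Robinson's CQ enters.

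I would prove boundedness by contradiction. Assume $t_k \coloneqq \|\mu^k\| + \fnorm{\Omega_k} \to \infty$ along a subsequence, and set $\hat\mu^k \coloneqq \mu^k/t_k$ and $\hat\Omega_k \coloneqq \Omega_k/t_k$. These normalized multipliers lie on the unit sphere of $\R^p\times\sym$, so after passing to a further subsequence I may assume $\hat\mu^k\to\hat\mu$ and $\hat\Omega_k\to\hat\Omega$ with $\|\hat\mu\|+\fnorm{\hat\Omega}=1$; since $\sym_+$ is a closed cone, $\hat\Omega\succeq 0$. Dividing the stationarity relation $\nabla_x L(x^k,\mu^k,\Omega_k)=\grad f(x^k)-{\rm D}h(x^k)^\T\mu^k-{\rm D}G(x^k)^{\ast}[\Omega_k]\to 0$ by $t_k$ and using $t_k\to\infty$ together with continuity of $\grad f$, ${\rm D}h$, ${\rm D}G$ at $\bar x$, I obtain in the limit the equation ${\rm D}h(\bar x)^\T\hat\mu + {\rm D}G(\bar x)^{\ast}[\hat\Omega]=0$. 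Because positive scaling does not change which eigenvalues of a matrix vanish, the AKKT eigenvalue condition also applies to $\hat\Omega_k$: for each $j\in\bar\alpha$ one has $\hat\Omega_k u_{k,j}=0$ eventually, where $u_{k,j}$ is the $j$-th column of the orthogonal matrix $U_k$ diagonalizing $\Omega_k$. Letting $k\to\infty$ with $U_k\to U$ gives $\hat\Omega u_j=0$ for every $j\in\bar\alpha$, so the range of $\hat\Omega$ is contained in $\ker(G(\bar x))=\operatorname{Im}(U_{\bar\beta})$, whence $\langle G(\bar x),\hat\Omega\rangle=0$.

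To close the contradiction I would invoke Robinson's CQ: choose $d$ with ${\rm D}h(\bar x)d=0$ and $G(\bar x)+{\rm D}G(\bar x)[d]\succ 0$. Pairing the limiting equation with $d$ and using the adjoint identities yields $\langle\hat\mu,{\rm D}h(\bar x)d\rangle+\langle\hat\Omega,{\rm D}G(\bar x)[d]\rangle=0$; since ${\rm D}h(\bar x)d=0$ this forces $\langle\hat\Omega,{\rm D}G(\bar x)[d]\rangle=0$, and combined with $\langle G(\bar x),\hat\Omega\rangle=0$ I obtain $\langle\hat\Omega,G(\bar x)+{\rm D}G(\bar x)[d]\rangle=0$. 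If $\hat\Omega=0$, the limiting equation reduces to ${\rm D}h(\bar x)^\T\hat\mu=0$, and full row rank of ${\rm D}h(\bar x)$ gives $\hat\mu=0$, contradicting $\|\hat\mu\|+\fnorm{\hat\Omega}=1$; hence $\hat\Omega\neq 0$. But a nonzero positive semidefinite matrix paired with the positive definite matrix $G(\bar x)+{\rm D}G(\bar x)[d]$ satisfies $\langle\hat\Omega,G(\bar x)+{\rm D}G(\bar x)[d]\rangle>0$, contradicting the equality just derived. Thus $\{(\mu^k,\Omega_k)\}$ is bounded.

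With boundedness in hand, I would extract a subsequence along which $\mu^{k_j}\to\bar\mu$ and $\Omega_{k_j}\to\bar\Omega\in\sym_+$. Passing to the limit in $\nabla_x L(x^{k_j},\mu^{k_j},\Omega_{k_j})\to 0$ gives $\nabla_x L(\bar x,\bar\mu,\bar\Omega)=0$ by continuity, and repeating the eigenvalue argument above, now for $\Omega_{k_j}$ itself, yields $\bar\Omega u_j=0$ for $j\in\bar\alpha$ and therefore $\langle G(\bar x),\bar\Omega\rangle=0$. Hence $(\bar x,\bar\mu,\bar\Omega)$ is a KKT triplet. I expect the boundedness step to be the main obstacle, since it is the only point at which Robinson's CQ is genuinely needed; the remaining limit-passing is routine once compactness of the normalized multipliers and closedness of $\sym_+$ are invoked.
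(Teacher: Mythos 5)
Your proof is correct: the normalization $t_k \coloneqq \|\mu^k\| + \fnorm{\Omega_k}$, the scaled limiting equation ${\rm D}h(\bar x)^{\T}\hat\mu + {\rm D}G(\bar x)^{\ast}[\hat\Omega] = 0$, the transfer of the AKKT spectral condition to the normalized multipliers (since $\Omega_k u_{k,j} = \lambda_j^{U_k}(\Omega_k)\, u_{k,j} = 0$ eventually for $j \in \bar\alpha$ and $U_k \to U$), the resulting complementarity $\langle G(\bar x), \hat\Omega\rangle = 0$, and the two-case contradiction under Robinson's CQ (full row rank of ${\rm D}h(\bar x)$ disposing of $\hat\Omega = 0$, and $\langle \hat\Omega, G(\bar x) + {\rm D}G(\bar x)[d]\rangle > 0$ disposing of $0 \neq \hat\Omega \succeq 0$) are all sound, and the final limit passage is indeed routine once boundedness is in hand. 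The comparison with the paper is, however, somewhat unusual: the paper gives no proof of this proposition at all. It cites \cite[Theorem~3.3]{andreani2024optimality}, a result proved there for nonlinear symmetric cone programming, and merely remarks that the present statement --- which differs in that it asserts subsequential convergence of the \emph{given} AKKT multiplier sequence rather than mere existence of some KKT multipliers --- can be verified from that proof. Your argument is the standard boundedness-by-contradiction scheme that such proofs follow, so it is best viewed as a self-contained reconstruction, specialized to $\SDP$, of what the paper leaves implicit. What your route buys: the strengthened subsequence form of the conclusion (exactly the part where the paper's statement deviates from the cited one) is derived explicitly, with no Jordan-algebra or symmetric-cone machinery, using only the spectral data $U_k \to U$ of Definition~\ref{def:akkt}, closedness of $\sym_+$, and the elementary fact that $\tr{AB} > 0$ whenever $0 \neq A \succeq 0$ and $B \succ 0$. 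What the paper's citation buys: brevity, and the extra generality of the symmetric-cone setting.
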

\noindent
Although the statement of Proposition~\ref{prop:akktAndRImpliesKKT} is different from that of~\cite[Theorem 3.3]{andreani2024optimality}, one can verify that it is true from its proof.

The following result shows that AKKT2 serves as a strong second-order optimality condition in the sense that it implies WSONC under Robinson's CQ and WCR. Andreani et al.~\cite{akkt2} have shown the same assertion for NLP, and thus the following theorem can be regarded as an extension of the existing result to NSDP.

\begin{theorem}\label{thm:strenthAKKT2}
    If $\bar x$ is an AKKT\emph{2} point of \eqref{NSDP} and satisfies Robinson's CQ and WCR, then $\bar x$ satisfies WSONC.
\end{theorem}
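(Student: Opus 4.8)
The goal is to show that an AKKT2 point satisfying Robinson's CQ and WCR satisfies WSONC. The plan is to start from an AKKT2 sequence $\{(x^k,\mu^k,\Omega_k,\varepsilon_k)\}$ and pass to the limit in the second-order inequality, transferring the inequality from the perturbed subspaces $S(x^k,\bar x)$ to the limiting critical subspace $S(\bar x)$. First I would invoke Proposition~\ref{prop:akktAndRImpliesKKT}: since $\bar x$ is in particular an AKKT point (the AKKT2 sequence contains an AKKT sequence) and Robinson's CQ holds, there exist limit multipliers $\bar\mu\in\R^p$ and $\bar\Omega\in\sym_+$ and a subsequence $\{k_j\}$ along which $\mu^{k_j}\to\bar\mu$, $\Omega_{k_j}\to\bar\Omega$, and $(\bar x,\bar\mu,\bar\Omega)$ is a KKT triplet. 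This pins down the multipliers at which WSONC must be verified.

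Next I would fix an arbitrary $d\in S(\bar x)$ and produce a recovery sequence. By Lemma~\ref{lm: inner_semicontinuous}, the WCR property at $\bar x$ is equivalent to inner semicontinuity of $x\mapsto S(x,\bar x)$, which in turn guarantees a sequence $d^k\in S(x^k,\bar x)$ with $d^k\to d$. Restricting to the subsequence $\{k_j\}$, the AKKT2 inequality gives, for all large $j$,
\begin{align*}
(d^{k_j})^{\top}\bigl(\nabla_{xx}^2 L(x^{k_j},\mu^{k_j},\Omega_{k_j})+\sigma(x^{k_j},\Omega_{k_j})\bigr)\,d^{k_j}\ \geq\ -\varepsilon_{k_j}\|d^{k_j}\|^2.
\end{align*}
The strategy is then to take $j\to\infty$ on both sides. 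On the right, $\varepsilon_{k_j}\to 0$ and $\|d^{k_j}\|^2\to\|d\|^2$, so the bound tends to $0$. On the left I would use continuity: $\nabla_{xx}^2 L$ is continuous in all arguments (as $f,h,G$ are $C^2$), and with $x^{k_j}\to\bar x$, $\mu^{k_j}\to\bar\mu$, $\Omega_{k_j}\to\bar\Omega$, $d^{k_j}\to d$, the Hessian term converges to $d^{\top}\nabla_{xx}^2 L(\bar x,\bar\mu,\bar\Omega)d$. Passing to the limit yields $d^{\top}\bigl(\nabla_{xx}^2 L(\bar x,\bar\mu,\bar\Omega)+\sigma(\bar x,\bar\Omega)\bigr)d\geq 0$, which is exactly the WSONC inequality, since $d\in S(\bar x)$ was arbitrary.

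The main obstacle is the convergence of the sigma-term $\sigma(x^{k_j},\Omega_{k_j})\to\sigma(\bar x,\bar\Omega)$, because $\sigma$ involves the Moore--Penrose pseudoinverse $G(x)^{\dagger}$, which is \emph{not} continuous where the rank of $G(x)$ changes, and the rank of $G(x^{k_j})$ may well differ from that of $G(\bar x)$ along the sequence. I would address this by not attempting to establish continuity of $\sigma$ directly. Instead, the natural route is to control the quadratic form $d^{\top}\sigma(x^{k_j},\Omega_{k_j})d$ on the restricted subspace $S(x^{k_j},\bar x)$ using the structure already exploited in Lemma~\ref{lm: difference_sigma_and_subdifferential_SAKKT2}: on that subspace the sigma-term reduces to a sum over the $\bar\alpha\times\gamma^k$ block, and the eigenvalue decomposition~\eqref{eq:decomp_G} together with $\Omega_k=\rho_k\projs(-G(x^k))$ governs its limiting behaviour. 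The key point to verify is that, as $x^{k_j}\to\bar x$ with $d^{k_j}\in S(x^{k_j},\bar x)$ and $d^{k_j}\to d\in S(\bar x)$, the blocks $\tilde H_{k,\bar\alpha\gamma^k}$ and the coefficient $\mathcal{A}_{k,ij}=-\lambda_{k,j}/\lambda_{k,i}$ combine so that $d^{\top}\sigma(x^{k_j},\Omega_{k_j})d^{k_j}$ converges to $d^{\top}\sigma(\bar x,\bar\Omega)d$; here the limiting multiplier relation $\bar\Omega=\lim\rho_{k_j}\projs(-G(x^{k_j}))$ from Proposition~\ref{prop:akktAndRImpliesKKT} is what keeps $\rho_k$ times the small negative eigenvalues bounded, so the products $\rho_k\lambda_{k,j}$ stay controlled and the limiting sigma-term is finite and matches the KKT-multiplier expression. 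Establishing this block-wise convergence carefully is the crux of the argument.
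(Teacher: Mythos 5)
Your skeleton is exactly the paper's proof: (i) an AKKT2 sequence is in particular an AKKT sequence, so Robinson's CQ and Proposition~\ref{prop:akktAndRImpliesKKT} yield $\bar\mu$, $\bar\Omega$ and a subsequence $\{k_j\}$ with $\mu^{k_j}\to\bar\mu$, $\Omega_{k_j}\to\bar\Omega$ and $(\bar x,\bar\mu,\bar\Omega)$ a KKT triplet; (ii) WCR together with Lemma~\ref{lm: inner_semicontinuous} produces $d^{k_j}\in S(x^{k_j},\bar x)$ with $d^{k_j}\to d$ for each fixed $d\in S(\bar x)$; (iii) pass to the limit in the AKKT2 inequality along $\{k_j\}$. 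These three steps are literally what the paper does, and your continuity argument for the Lagrangian Hessian part is the paper's (implicit) one and is fine.

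The problem is your handling of the sigma-term, which you correctly single out as the delicate point but do not close, and whose proposed resolution rests on structure that is not available. Lemma~\ref{lm: difference_sigma_and_subdifferential_SAKKT2} --- the coefficients $\mathcal{A}_{k,ij}=-\lambda_{k,j}/\lambda_{k,i}$, the index set $\gamma^k$, and the representation $\Omega_k=\rho_k\projs(-G(x^k))$ --- is stated and proved \emph{only} for penalty-form multipliers; Definition~\ref{def:akkt2} imposes no such form on $\Omega_k$ (there is no $\rho_k$ at all), and Proposition~\ref{prop:akktAndRImpliesKKT} gives only $\Omega_{k_j}\to\bar\Omega$, not the relation $\bar\Omega=\lim_j\rho_{k_j}\projs(-G(x^{k_j}))$ that you invoke. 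Indeed, in the paper that lemma is used precisely where the sequences do come from penalization (Theorems~\ref{thm:optimalitySCAKKT2} and~\ref{theo:basic_penalty}) and plays no role in Theorem~\ref{thm:strenthAKKT2}. What a general AKKT2 sequence actually provides is the AKKT structure of $\Omega_k$ (orthogonal $U_k\to U$ diagonalizing $\Omega_k$, with the eigenvalues indexed by $\bar\alpha$ eventually zero) plus complementarity $G(\bar x)\bar\Omega=0$ at the limit. Working in an eigenbasis of $G(x^{k_j})$ and using $U_{k,\bar\beta}^{\top}{\rm D}G(x^k)[d]\,U_{k,\bar\beta}=0$ for $d\in S(x^k,\bar x)$ (the computation \eqref{eq:active}, which does not need the penalty form), the quadratic form $d^{\top}\sigma(x^{k_j},\Omega_{k_j})d$ splits into blocks that converge to $d^{\top}\sigma(\bar x,\bar\Omega)d$, plus one residual term of the type $\langle\tilde\Omega_{k,\bar\alpha\bar\alpha},\,\tilde H_{k,\bar\alpha\bar\beta}\,\Lambda_{k,\bar\beta}^{\dagger}\,\tilde H_{k,\bar\alpha\bar\beta}^{\top}\rangle$, where $\Lambda_{k,\bar\beta}$ collects the $|\bar\beta|$ smallest eigenvalues of $G(x^k)$. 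There, the decay $\tilde\Omega_{k,\bar\alpha\bar\alpha}\to0$ guaranteed by AKKT competes against $\Lambda_{k,\bar\beta}^{\dagger}$, whose norm can diverge like the reciprocal of the smallest nonzero eigenvalue of $G(x^k)$; these two rates are a priori unrelated for an arbitrary AKKT2 sequence, so the term does not vanish by soft continuity. Any complete proof must resolve this competition --- and, to be fair to you, the paper's own one-line ``taking the limit $j\to\infty$ \dots yields'' glosses over exactly the same issue, so on this point your attempt is more honest about where the difficulty lies, even though it does not overcome it.
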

\begin{proof}
Since $\bar x$ satisfies AKKT2, there exist an AKKT2 sequence $\{(x^k, \mu^k, \Omega_k, \varepsilon_k)\} \subset \mathbb{R}^n \times \mathbb{R}^p \times \sym_+ \times (0,\infty)$ and $\bar{n} \in \N$ such that $x^k \to \bar x$ and $\varepsilon_k \to 0$ as $k \to \infty$, and
\begin{equation}\label{ieq: strenghAKKT2}
\forall k \geq \bar{n}, \quad \forall d \in S(x^k, \bar x), \quad d^{\top}\Bigl(\nabla_{xx}^2 L(x^k, \mu^k, \Omega_k) + \sigma(x^k, \Omega_k)\Bigr)d \geq -\varepsilon_k\|d\|^2.
\end{equation}
We notice that $\bar{x}$ is an AKKT point that satisfies Robinson's CQ. Moreover, $\{(x^k, \mu^k, \Omega_k)\} \subset \mathbb{R}^n \times \mathbb{R}^p \times \sym_+$ is an AKKT sequence corresponding to $\bar{x}$. By~Proposition~\ref{prop:akktAndRImpliesKKT}, there exist $\bar{\mu} \in \R^p$, $\bar{\Omega} \in \sym$, and $\{ k_j \} \subset \N$ such that $\mu^{k_j} \to \mu$ and $\Omega_{k_j} \to \bar{\Omega}$ as $j \to \infty$ and $(\bar{x}, \bar{\mu}, \bar{\Omega})$ is a KKT point. Let $d \in S(\bar{x})$ be arbitrary. From Lemma~\ref{lm: inner_semicontinuous}, there exists $\{ d^{k_j} \} \subset S(x^{k_{j}}, \bar{x})$ such that $d^{k_j} \to d$ as $j \to \infty$.
Taking the limit $j \to \infty$ in~\eqref{ieq: strenghAKKT2} with {$d = d^{k_j}$} yields
\[
d^{\top}\Bigl(\nabla_{xx}^2 L(\bar x, \bar \mu, \bar \Omega) + \sigma(\bar x, \bar \Omega)\Bigr)d \geq 0 \quad \forall d \in S(\bar x).
\]
This verifies that WSONC holds at $\bar x$.
\end{proof}

Finally, we provide the relation between AKKT2 and CAKKT2. Regarding AKKT and CAKKT, it is known that CAKKT implies AKKT. From this fact and the definitions~\ref{def:akkt2} and~\ref{def:cakkt2}, we can easily verify that the same relation also holds for these second-order sequential optimality conditions. Thus, we omit the proof.
\begin{theorem}\label{thm:AKKT2andCAKKT2}
If $\bar x$ is a CAKKT\emph{2} point of \eqref{NSDP}, then $\bar x$ is an AKKT\emph{2} point.
\end{theorem}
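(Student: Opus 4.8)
The plan is to exploit the fact that Definitions~\ref{def:akkt2} and~\ref{def:cakkt2} are \emph{identical} except that the underlying first-order sequence is required to be an AKKT sequence in the former and a CAKKT sequence in the latter: the perturbed critical subspace $S(x^k,\bar x)$, the index $\bar n$, the tolerances $\varepsilon_k$, and the second-order inequality $d^\top(\nabla^2_{xx}L(x^k,\mu^k,\Omega_k)+\sigma(x^k,\Omega_k))d\ge-\varepsilon_k\|d\|^2$ make no reference whatsoever to the AKKT/CAKKT distinction. Hence, starting from a CAKKT2 point $\bar x$ with CAKKT2 sequence $\{(x^k,\mu^k,\Omega_k,\varepsilon_k)\}$, the entire second-order apparatus transfers verbatim, and the only thing left to establish is that the first-order part $\{(x^k,\mu^k,\Omega_k)\}$ furnishes an AKKT sequence converging to $\bar x$.

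First I would record the shared data $x^k\to\bar x$, $\varepsilon_k\to 0$, $\nabla_x L(x^k,\mu^k,\Omega_k)\to 0$, and the inequality above for all $k\ge\bar n$ and all $d\in S(x^k,\bar x)$. Next I would invoke the known implication that CAKKT implies AKKT~\cite{Santos2021}. The cleanest route, and the one asserted in the surrounding text, is to check that the \emph{same} sequence is already an AKKT sequence: $x^k\to\bar x$ and $\nabla_x L\to 0$ are common to Definitions~\ref{def:akkt} and~\ref{def:cakkt}, so it remains only to verify the AKKT eigenvalue condition from the complementarity $G(x^k)\circledcirc\Omega_k\to 0$. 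If this holds for $\{\Omega_k\}$ directly, then the identical quadruple $\{(x^k,\mu^k,\Omega_k,\varepsilon_k)\}$ together with $\bar n$ witnesses AKKT2 by Definition~\ref{def:akkt2}, completing the proof immediately.

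The main obstacle lies precisely in that eigenvalue condition. Definition~\ref{def:akkt} asks that, in the directions spanned by the positive eigenspace of $G(\bar x)$ (the block $\bar\alpha$), the matching eigenvalues of $\Omega_k$ be \emph{exactly} zero for all large $k$, whereas $G(x^k)\circledcirc\Omega_k\to 0$ only forces those components to vanish \emph{asymptotically}. To bridge this gap I would pass from $\Omega_k$ to its truncation $\tilde\Omega_k$ obtained by zeroing the spectral mass of $\Omega_k$ in the $\bar\alpha$ directions; since $G(x^k)$ stays uniformly positive definite on this block, complementarity gives $\fnorm{\Omega_k-\tilde\Omega_k}\to 0$. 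This truncation enforces the AKKT eigenvalue condition by construction and preserves stationarity, because $\nabla_x L(x^k,\mu^k,\tilde\Omega_k)=\nabla_x L(x^k,\mu^k,\Omega_k)+{\rm D}G(x^k)^{\ast}[\Omega_k-\tilde\Omega_k]\to 0$.

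Finally I would transfer the second-order inequality to $\tilde\Omega_k$. Both $\Omega\mapsto\nabla^2_{xx}L(x^k,\mu^k,\Omega)$ (through the term $-{\rm D}^2G(x^k)^{\ast}\Omega$) and $\Omega\mapsto\sigma(x^k,\Omega)$ are \emph{linear} in $\Omega$, and the perturbation $\Omega_k-\tilde\Omega_k$ is supported on the $\bar\alpha$ block where $G(x^k)^{\dagger}$ stays bounded as $x^k\to\bar x$; hence the change in $d^\top(\nabla^2_{xx}L+\sigma)d$ is bounded by $C\,\fnorm{\Omega_k-\tilde\Omega_k}\,\|d\|^2$ for a constant $C$ uniform in $k$. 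Setting $\tilde\varepsilon_k\coloneqq\varepsilon_k+C\,\fnorm{\Omega_k-\tilde\Omega_k}$, which still tends to zero, the inequality holds for $\{\tilde\Omega_k\}$ with tolerance $\tilde\varepsilon_k$ on $S(x^k,\bar x)$, so $\{(x^k,\mu^k,\tilde\Omega_k,\tilde\varepsilon_k)\}$ is an AKKT2 sequence for $\bar x$. The delicate point is the uniform boundedness of $G(x^k)^{\dagger}$ restricted to the $\bar\alpha$ block, which follows from continuity of eigenvalues and $\lambda_{|\bar\alpha|}(G(x^k))>0$; this is exactly the estimate already exploited in Lemma~\ref{lm: difference_sigma_and_subdifferential_SAKKT2}.
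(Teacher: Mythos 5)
The paper itself offers no real proof here: it simply asserts that the implication ``can easily be verified'' from the definitions together with the known fact that CAKKT implies AKKT. Your proposal correctly exposes why this is \emph{not} automatic: Definition~\ref{def:akkt} requires the eigenvalues of $\Omega_k$ matching the positive eigenvalues of $G(\bar x)$ to be \emph{exactly} zero for large $k$, whereas $G(x^k)\circledcirc\Omega_k\to 0$ only forces asymptotic decay (e.g.\ $G(x^k)\to 2$, $\Omega_k=1/k$ is a CAKKT but not an AKKT multiplier sequence), so the multipliers must be modified. Your truncation $\tilde\Omega_k$ (zeroing, in the eigenbasis $U_k$ of $G(x^k)$, the rows and columns indexed by $\bar\alpha$), the estimate $\fnorm{\Omega_k-\tilde\Omega_k}\to 0$ from complementarity, and the preservation of $\nabla_x L\to 0$ are all sound; modulo a routine subsequence argument producing the converging eigenbases $U_k\to U$ demanded by Definition~\ref{def:akkt} (which you assert ``by construction'' but do not carry out), the first-order half of your argument is fine.

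The genuine gap is in your last step, the transfer of the second-order inequality. Writing $H_k={\rm D}G(x^k)[d]$, $\tilde H_k=U_k^\T H_k U_k$, $\hat\Omega_k=U_k^\T\Omega_k U_k$, and $\Lambda_k^\dagger=U_k^\T G(x^k)^\dagger U_k$, one has $d^\T\sigma(x^k,\Omega_k-\tilde\Omega_k)d=2\tr{(\hat\Omega_k-U_k^\T\tilde\Omega_k U_k)\,\tilde H_k\Lambda_k^\dagger\tilde H_k}$. Membership $d\in S(x^k,\bar x)$ annihilates only the $\bar\beta\bar\beta$ block of $\tilde H_k$; the $\bar\alpha\bar\beta$ blocks survive, so the $\bar\alpha\bar\alpha$ block of $\tilde H_k\Lambda_k^\dagger\tilde H_k$ contains $\tilde H_{k,\bar\alpha\bar\beta}\,\Lambda_{k,\bar\beta}^\dagger\,\tilde H_{k,\bar\beta\bar\alpha}$, where $\Lambda_{k,\bar\beta}^\dagger$ holds the reciprocals of the nonzero among the $|\bar\beta|$ smallest eigenvalues of $G(x^k)$ --- quantities that blow up as $x^k\to\bar x$. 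Your perturbation therefore produces the term
\begin{equation*}
\tr{\hat\Omega_{k,\bar\alpha\bar\alpha}\,\tilde H_{k,\bar\alpha\alpha_-^k}\,\Lambda_{k,\alpha_-^k}^{-1}\,\tilde H_{k,\alpha_-^k\bar\alpha}}\;\geq\;0,
\end{equation*}
($\alpha_-^k$ being the indices of the positive eigenvalues of $G(x^k)$ that converge to zero), which enters the truncated quadratic form with a minus sign, and complementarity gives \emph{no rate}: with $\fnorm{\hat\Omega_{k,\bar\alpha\bar\alpha}}\sim 1/k$ and an eigenvalue $\lambda_{k,l}\sim 1/k^2$, $l\in\alpha_-^k$, this term grows like $k\|d\|^2$, so no constant $C$ uniform in $k$ exists and $\tilde\varepsilon_k\not\to 0$. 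Note also that you cannot borrow the estimate of Lemma~\ref{lm: difference_sigma_and_subdifferential_SAKKT2}: there $\Omega_k=\rho_k\projs(-G(x^k))$ commutes with $G(x^k)$ and carries no spectral mass on $\bar\alpha$ nor on the small-positive directions, which is precisely what prevents this coupling; a general CAKKT2 multiplier has no such structure. Closing the gap requires a genuinely new ingredient --- either a different modification of $\Omega_k$, or some control, extracted from the CAKKT2 conditions themselves, of $\fnorm{\hat\Omega_{k,\bar\alpha\bar\alpha}}$ relative to the vanishing nonzero eigenvalues of $G(x^k)$.
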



We conclude Section~\ref{sec:AKKT2} by Figure~\ref{fig:relationship_conclusion}.
\begin{figure}[htbp]
    \centering
    \begin{tikzpicture}[node distance=1.3cm and 1.8cm]
        \usetikzlibrary{positioning}
        \tikzset{
          box/.style={
            rectangle,
            rounded corners,
            minimum width=1.8cm,
            minimum height=0.8cm,
            text centered,
            draw=black,
            font=\small
          },
          line/.style={draw, -latex, thick}
        }

        \node[box] (LocalMin) {Local Minima};
        \node[box, right=of LocalMin] (CAKKT2) {CAKKT2};
        \node[box, right=of CAKKT2] (AKKT2) {AKKT2};
        \node[box, below=of CAKKT2] (CAKKT) {CAKKT};
        \node[box, below=of AKKT2] (AKKT) {AKKT};
        \node[box, right=3cm of AKKT2] (WSONC) {WSONC};
        \node[box, at={(AKKT-|WSONC)}, below=of WSONC] (KKT) {KKT}; 

        \draw[line] (LocalMin) -- 
            node[midway, above, fill=white, inner sep=1pt] {\tiny Thm.~\ref{thm:optimalitySCAKKT2}} 
        (CAKKT2);

        \draw[line] (CAKKT2) -- node[midway, above, fill=white, inner sep=1pt] {\tiny Thm.~\ref{thm:AKKT2andCAKKT2}} (AKKT2); 

        \draw[line] (AKKT2) -- 
            node[midway, above, fill=white, inner sep=1pt] {\tiny Thm.~\ref{thm:strenthAKKT2}}
            node[midway, below, fill=white, inner sep=1pt] {
                \begin{minipage}{2.3cm}
                    \centering \tiny under WCR \&\\ Robinson's CQ
                \end{minipage}
            }
        (WSONC);

        \draw[line] (AKKT2) -- 
            node[midway, right, fill=white, inner sep=1pt] {\tiny Def.~\ref{def:akkt2}} 
        (AKKT);

        \draw[line] (CAKKT2) -- 
            node[midway, right, fill=white, inner sep=1pt] {\tiny Def.~\ref{def:cakkt2}} 
        (CAKKT);

        \draw[line] (CAKKT) -- (AKKT);

        \draw[line] (AKKT) -- 
            node[midway, above, fill=white, inner sep=1pt] {\tiny Prop.~\ref{prop:akktAndRImpliesKKT}}
            node[midway, below, fill=white, inner sep=1pt] {
             \tiny under Robinson's CQ
            } 
        (KKT);

        \draw[line] (WSONC) -- 
            node[midway, right, fill=white, inner sep=1pt] {\tiny Def.~\ref{def: WSOC}} 
        (KKT);

    \end{tikzpicture}
    \caption{Relationships among the AKKT2-type conditions and related concepts}
    \label{fig:relationship_conclusion}
\end{figure}
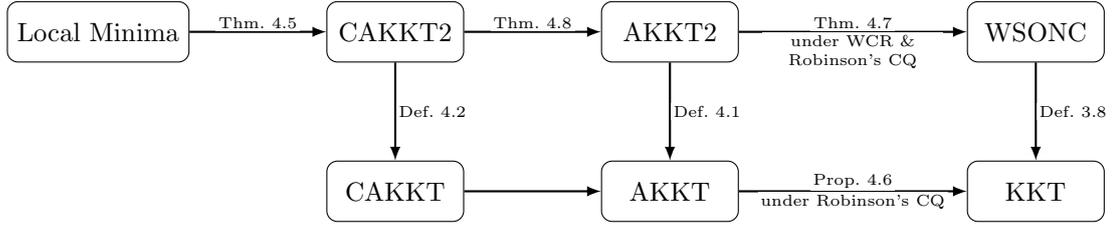


\section{An algorithm that generates AKKT2-type points}\label{sec:algms}

In this section, we present a penalty method to find points satisfying AKKT2-type conditions. To ensure that CAKKT is satisfied at accumulation points of the method, we assume the following generalized Kurdyka-\L ojasiewicz (KL) inequality.

\begin{assumption} \label{assumption:Lojasiewicz}
There exist $\delta > 0$ and a continuous function $\psi \colon \mathbb{B}(x^{\ast}, \delta) \to \R $ {such that} $\psi(x) \to 0$ as $x \to x^{\ast}$ and
\begin{align*}
\mathcal{P}(x)  \leq \psi(x) \|\grad \mathcal{P}(x) \| \quad \forall x \in \mathbb{B}(x^{\ast}, \delta), 
\end{align*}
where $\mathcal{P}(x)$ is defined as~\eqref{def:penaltyterm}.
\end{assumption}

Let $\{\rho_k\} \subset \mathbb{R}_+$ be a sequence such that $\rho_k \to \infty$ as $k \to \infty$. The proposed penalty method solves the following subproblem at each iteration $k$:
\begin{equation}
\begin{aligned}
\underset{x \in \mathbb{R}^n}{\text{minimize}} \quad
\phi_{\rho_k}(x) \coloneqq f(x) +
\frac{\rho_k}{2} \left( \|h(x)\|^2 + \left\| \Pi_{\mathbb{S}_+^m}(-G(x)) \right\|_{\mathrm{F}}^2 \right).
\end{aligned}
\label{penalty_subproblem}
\end{equation}
\noindent
By using subproblem~\eqref{penalty_subproblem}, we provide a formal statement of the proposed penalty method below.

\begin{algorithm}
\caption{Penalty method}
\begin{algorithmic}[1]
\STATE Choose $\{\varepsilon_k\} \subset (0,\infty)$ and $\{\rho_k\} \subset (0, \infty)$ such that $\varepsilon_k \to 0$ and $\rho_k \to \infty$ as $k \to \infty$. Set $x^0 \in \R^n$ and $k := 0$.
\STATE Find an approximate solution $x^k$ of \eqref{penalty_subproblem} satisfying 
\begin{equation}
\|\nabla \phi_{\rho_k} (x^k) \| \leq \varepsilon_k , \quad
d^\top \bar{\nabla}^2 \phi_{\rho_k}(x^k) d \geq -\varepsilon_k \|d\| ^2 \quad \forall d \in \R^n,  \label{penalty_condition}
\end{equation}
where $\bar{\nabla}^2 \phi_{\rho_k}(x^k)$ is an {arbitrary} element of $\partial^2 \phi_{\rho_k}(x^k)$ defined in terms of $V_k \in \partial \projs (-G(x^k))$.
\STATE Set $k \leftarrow k + 1$ and go back to Step~2.
\end{algorithmic}
\label{algm:penalty}
\end{algorithm}

\begin{remark}
At each iteration $k$, Algorithm~\emph{\ref{algm:penalty}} requires computing an approximate solution $x^k$ of subproblem~\eqref{penalty_subproblem} that satisfies condition~\eqref{penalty_condition}. Whether an algorithm that can find $x^k$ could be developed is an open question for future research. In the subsequent convergence analysis, we assume that the approximate solution of~\eqref{penalty_subproblem} that satisfies~\eqref{penalty_condition} can be found at each iteration.
\end{remark}

The next theorem shows that any sequence generated by Algorithm~\ref{algm:penalty} converges to an AKKT2 and a CAKKT2 points under suitable assumptions.

\begin{theorem}  \label{theo:basic_penalty}
Let $\{ x^k \}$ be a sequence generated by Algorithm~\emph{\ref{algm:penalty}}, and let $x^{\ast}$ be an arbitrary accumulation point of $\{ x^k \}$. If $x^{\ast}$ satisfies Assumption~\emph{\ref{assumption:Lojasiewicz}}, then $x^{\ast}$ is a CAKKT\emph{2} point. Consequently, $x^{\ast}$ is also an AKKT\emph{2} point.
\end{theorem}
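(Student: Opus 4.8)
The plan is to show that an arbitrary accumulation point $x^{\ast}$ of the sequence $\{x^k\}$ generated by Algorithm~\ref{algm:penalty} is a CAKKT2 point, and then invoke Theorem~\ref{thm:AKKT2andCAKKT2} to conclude that $x^{\ast}$ is also AKKT2. By passing to a subsequence, I would assume without loss of generality that the whole sequence $\{x^k\}$ converges to $x^{\ast}$. Following the standard penalty-method construction, I would set $\mu^k \coloneqq -\rho_k h(x^k)$ and $\Omega_k \coloneqq \rho_k \projs(-G(x^k)) \in \sym_+$. The proof then splits into two tasks: first, verifying that $\{(x^k, \mu^k, \Omega_k)\}$ is a CAKKT sequence corresponding to $x^{\ast}$ (in the sense of Definition~\ref{def:cakkt}); and second, verifying the second-order inequality of Definition~\ref{def:cakkt2} on the perturbed critical subspace $S(x^k, x^{\ast})$.

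For the first task, I would establish the three conditions of Definition~\ref{def:cakkt}. Feasibility of $x^{\ast}$ and $\lim_k \nabla_x L(x^k,\mu^k,\Omega_k) = 0$ follow from the first-order condition $\|\nabla\phi_{\rho_k}(x^k)\| \le \varepsilon_k$ together with the gradient formula of Theorem~\ref{thm:fitz}, since $\nabla\phi_{\rho_k}(x^k) = \nabla f(x^k) + \rho_k\nabla\mathcal{P}(x^k) = \nabla_x L(x^k,\mu^k,\Omega_k)$ under the chosen multipliers. The crucial and most delicate ingredient is the complementarity limit $\lim_k G(x^k)\circledcirc\Omega_k = 0$, and this is precisely where Assumption~\ref{assumption:Lojasiewicz} enters: the generalized KL inequality $\mathcal{P}(x) \le \psi(x)\|\nabla\mathcal{P}(x)\|$ with $\psi(x)\to 0$ lets me control $\rho_k\mathcal{P}(x^k)$. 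Indeed, since $\rho_k\|\nabla\mathcal{P}(x^k)\| = \|\rho_k\nabla\mathcal{P}(x^k)\| = \|\nabla\phi_{\rho_k}(x^k) - \nabla f(x^k)\|$ is bounded (using $\|\nabla\phi_{\rho_k}(x^k)\|\le\varepsilon_k$ and continuity of $\nabla f$), the inequality gives $\rho_k\mathcal{P}(x^k) \le \psi(x^k)\cdot\rho_k\|\nabla\mathcal{P}(x^k)\| \to 0$. From $\rho_k\mathcal{P}(x^k)\to 0$ and the definition~\eqref{def:penaltyterm} of $\mathcal{P}$, I would extract both $\rho_k\|h(x^k)\|^2\to 0$ and $\rho_k\|\projs(-G(x^k))\|_F^2\to 0$, and from these deduce $G(x^k)\circledcirc\Omega_k\to 0$ by a spectral argument relating the Jordan product of $G(x^k)$ with its projection to the negative eigenvalues of $G(x^k)$ scaled by $\rho_k$. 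This would confirm that $\{(x^k,\mu^k,\Omega_k)\}$ is a CAKKT sequence.

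For the second task, I would exploit the structural parallel with the proof of Theorem~\ref{thm:optimalitySCAKKT2}. The second-order acceptance criterion~\eqref{penalty_condition} states that $d^\top\bar{\nabla}^2\phi_{\rho_k}(x^k)d \ge -\varepsilon_k\|d\|^2$ for all $d\in\R^n$, where $\bar{\nabla}^2\phi_{\rho_k}(x^k)$ is defined through some $V_k\in\partial\projs(-G(x^k))$. Restricting $d$ to $S(x^k,x^{\ast})$ kills the $\rho_k(\mathrm{D}h(x^k)d)^\top\mathrm{D}h(x^k)d$ term (since $\mathrm{D}h(x^k)d=0$), and expanding $\bar{\nabla}^2\phi_{\rho_k}(x^k)$ via Theorem~\ref{thm:fitz} differentiated once more yields
\[
d^\top\nabla^2_{xx}L(x^k,\mu^k,\Omega_k)d + \rho_k\bigl\langle \mathrm{D}G(x^k)[d],\, V_k[\mathrm{D}G(x^k)[d]]\bigr\rangle \ge -\varepsilon_k\|d\|^2,
\]
exactly as in the derivation leading to~\eqref{sdp:akkt2ex}. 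At this point Lemma~\ref{lm: difference_sigma_and_subdifferential_SAKKT2} applies directly: for $k$ large enough so that $x^k\in\mathbb{B}(x^{\ast},r)$, it provides $d^\top\sigma(x^k,\Omega_k)d \ge \rho_k\langle\mathrm{D}G(x^k)[d], V_k[\mathrm{D}G(x^k)[d]]\rangle$ for all $d\in S(x^k,x^{\ast})$. Adding $d^\top\sigma(x^k,\Omega_k)d$ to both sides of the displayed inequality and substituting this bound gives $d^\top(\nabla^2_{xx}L(x^k,\mu^k,\Omega_k)+\sigma(x^k,\Omega_k))d \ge -\varepsilon_k\|d\|^2$, which is the required CAKKT2 inequality with the sequence $\{\varepsilon_k\}$ already supplied by the algorithm.

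The main obstacle I anticipate is the complementarity step $G(x^k)\circledcirc\Omega_k\to 0$: deriving it cleanly from $\rho_k\mathcal{P}(x^k)\to 0$ requires a careful spectral accounting, since $\Omega_k=\rho_k\projs(-G(x^k))$ and the Jordan product mixes the positive part of $G(x^k)$ (which multiplies a vanishing projection) with the negative part (on which the projection is active but $G(x^k)$ itself is small). The key quantitative fact is that for the negative eigenvalues $\lambda<0$ of $G(x^k)$, the corresponding contribution to $G(x^k)\circledcirc\Omega_k$ scales like $\rho_k\lambda^2$, which is controlled by $\rho_k\|\projs(-G(x^k))\|_F^2\to 0$; one must verify this dominates uniformly and that the cross terms vanish. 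Everything else is either a direct quotation of the cited results or a routine limit, so once this spectral estimate is in place the argument closes.
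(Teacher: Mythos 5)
Your proposal is correct and follows essentially the same route as the paper: the same multipliers $\mu^k = -\rho_k h(x^k)$ and $\Omega_k = \rho_k \projs(-G(x^k))$, the same use of Assumption~\ref{assumption:Lojasiewicz} with the bound $\rho_k\|\nabla\mathcal{P}(x^k)\| \leq \varepsilon_k + \|\nabla f(x^k)\|$ to obtain $\rho_k\mathcal{P}(x^k)\to 0$ (hence feasibility), and the same combination of the expanded second-order criterion~\eqref{penalty_condition} with Lemma~\ref{lm: difference_sigma_and_subdifferential_SAKKT2} on $S(x^k,x^{\ast})$, finishing via Theorem~\ref{thm:AKKT2andCAKKT2}. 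The one step you flag as the main obstacle is in fact immediate: $G(x^k)$ and $\projs(-G(x^k))$ are simultaneously diagonalizable, so there are no cross terms and $G(x^k)\circledcirc\Omega_k = -\rho_k\projs(-G(x^k))^2$, whence $\|G(x^k)\circledcirc\Omega_k\|_{\rm F} \leq \rho_k\|\projs(-G(x^k))\|_{\rm F}^2 \to 0$; the paper carries out exactly this estimate in trace form.
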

\begin{proof}
Since $x^{\ast}$ is an accumulation point, we can assume without loss of generality that $x^k \to x^{\ast}$ as $k \to \infty$. Let $k$ be arbitrary. We define $\mu^k \coloneqq - \rho_k h(x^k)$ and $\Omega_k\coloneqq \rho_k \projs \left(- G(x^k) \right)$. Then, condition~\eqref{penalty_condition} implies 
\begin{align}
\left\| \nabla f(x^k) - {\rm D}h(x^k)^\T \mu_k - {\rm D}G(x^k)^{\ast} \Omega_k \right\| \leq \varepsilon_k    \label{penalty:LagrangeVanish}
\end{align}
and
\begin{align}
& d^\T\left(\nabla^2 f(x^k) - \sum_{i=1}^p \mu_i^k \nabla^2 h_i(x^k) -{\rm D}^2 G(x^k)^{\ast} \Omega_k\right)d \ \nonumber
\\
& \hspace{5mm} +\rho_k ({\rm D}h(x^k)d)^\T {\rm D}h(x^k)d + d^\T\left( {\rm D}G(x^k)^{\ast} \left[\rho_k V_k[{\rm D}G(x^k) [d]]\right] \right) \geq -\varepsilon_k \|d\|^2   \label{penalty_condition_second_order}
\end{align}
for all $d \in \R^n$. By the definition of the Lagrange function, we can rewrite~\eqref{penalty:LagrangeVanish} as $\|\nabla_x L (x^k, \mu^k, \Omega_k) \| \leq \varepsilon_k$. It then follows from $\varepsilon_k \to 0~(k \to \infty)$ that
\begin{equation}
\lim_{k\to \infty} \nabla_x L (x^k, \mu^k, \Omega_k) = 0.    \label{algm:lagrangeVanish}
\end{equation}
\if0
Meanwhile, the matrix $G(x^k)$ admits a spectral decomposition
\begin{equation}\label{eq:decop_G_algm}
 G(x^k) = U_k \Lambda_k U_k^\T, \quad 
\Lambda_k =
\begin{bmatrix}
\Lambda_{k, \alpha^k} & 0 & 0\\ 
0 & 0 & 0\\
0 & 0 & \Lambda_{k, \gamma^k}
\end{bmatrix},   
\end{equation}
where the eigenvalues in $\Lambda_k$ are ordered in non-increasing order, and the index sets $\alpha^k$ and $\gamma^k$ correspond to the sets of positive and negative eigenvalues, respectively. By the definition of $\Omega_k$, $U_k$ in~\eqref{eq:decop_G_algm} simultaneously diagonalizes $\Omega_k$ as follows:
\begin{equation}\label{eq: perturbed_decomp_Omega_sym}
\Omega_k = \rho_k U_k \projs(-\Lambda_k) U_k^\top, \quad
\projs(-\Lambda_k) = \begin{bmatrix}
 0 & 0 & 0 \\
 0 & 0 & 0 \\
 0 & 0 & -\Lambda_{k,\gamma^k}
\end{bmatrix}.
\end{equation}
\fi
Let $\delta > 0$ be the constant defined in Assumption~\ref{assumption:Lojasiewicz}. Since $x^k \to \infty$ as $k \to \infty$, there exists $\bar{n} \in \mathbb{N}$ such that $x^k \in \mathbb{B}(x^{\ast}, \delta)$ for all $k \geq \bar{n}$. In the following, we suppose $k \geq \bar{n}$.
Assumption~\ref{assumption:Lojasiewicz} and condition~\eqref{penalty_condition} yield that
\begin{align} 
\frac{\rho_k}{2} \left(\|h(x^k)\|^2 + \| \projs(-G(x^k)) \|^2_{\rm F} \right)
& = \rho_k  \mathcal{P}(x^k) \nonumber
\\
&\leq \psi(x^k) \|\rho_k \grad \mathcal{P}(x^k)\| \nonumber
\\
&\leq \psi(x^k)\left( \| \grad f(x^k) + \rho_k \grad \mathcal{P}(x^k) \| + \|\grad f(x^k)\| \right) \nonumber
\\
& = \psi(x^k)\left( \|\nabla \phi_{\rho_k} (x^k )\| + \|\grad f(x^k)\| \right) \nonumber
\\
&\leq \psi(x^k)\left( \varepsilon_k  + \|\grad f(x^k)\| \right), \label{ieq:rhoPi}
\end{align}
where the third line follows from the triangle inequality. Note that $\psi(x^k) \to 0$, $\varepsilon_k \to 0$, and $\nabla f(x^k) \to \nabla f(x^{\ast})$ as $k \to \infty$. It then follows from~\eqref{ieq:rhoPi} that
\begin{equation} \label{eq:limit_feasibility_underLineq}
\lim_{k\to \infty} \rho_k \|h(x^k) \|^2 = 0, \quad   \lim_{k\to \infty} \rho_k \| \projs(-G(x^k)) \|^2_{\rm F} = 0.
\end{equation}
Meanwhile, notice that $\inner{G(x^k)}{\Omega_k} = - \rho_k \operatorname{tr} ( (  \projs(-G(x^k))^2 ) = -\rho_k \| \projs(-G(x^k)) \|^2_{\rm F}$.
It then follows from the second inequality of~\eqref{eq:limit_feasibility_underLineq} that
\begin{equation} \label{eq:innerproductlimit}
\lim_{k\to \infty} \inner{G(x^k)}{\Omega_k} = 0.
\end{equation}
It can be easily verified that $(-G(x^k)) \Pi_{S^m_{+}}(-G(x^k)) = \Pi_{S^m_{+}}(-G(x^k))^2$. By using this fact, we have
\begin{align*}
\begin{aligned}
    \| G(x^k) \Omega_k \|^2_{\rm F} & = \operatorname{tr}\left( \rho_k ^2 G(x^k) \projs(-G(x^k))^2 G(x^k)\right)   \\
    & = \tr{\rho^2_k \projs(-G(x^k))^4} \\
    & = \sum^m _{i = 1} \left( \rho_k \max \left\{\lambda_i(-G(x^k)), 0 \right\}^2 \right) ^2 \\
    &\leq \left (\sum^m _{i = 1} \rho_k  \max \left\{\lambda_i(-G(x^k)), 0 \right \}^2 \right)^2\\    
    & =   \operatorname{tr}\left (\rho_k  \projs(-G(x^k))^2 \right) ^2  \\
     &= |\inner{G(x^k)}{\Omega_k} |^2.
\end{aligned}
\end{align*}
Then, noting~\eqref{eq:innerproductlimit} and $\Vert G(x^k) \circledcirc \Omega_k \Vert_{{\rm F}} \leq \Vert G(x^k) \Omega_k \Vert_{{\rm F}}$ derives
\begin{align}
\lim_{k\to \infty} G(x^k) \circledcirc \Omega_k = 0. \label{ineq:complementary}
\end{align}
Now, we recall that~\eqref{eq:limit_feasibility_underLineq} holds and $\rho_k \to \infty$ as $k \to \infty$. These facts imply that $x^{\ast}$ is feasible. Therefore, by~\eqref{algm:lagrangeVanish} and~\eqref{ineq:complementary}, we conclude that $x^{\ast}$ is a CAKKT point. 
\par
From now on, we show that $\{(x^k, \mu^k, \Omega_k, \varepsilon_k)\}$ is a CAKKT2 sequence. Let $\bar r \coloneqq \min\{r, \delta \}$, where $r\in \R$ is defined in Lemma~\ref{lm: difference_sigma_and_subdifferential_SAKKT2}. Since $x^k \to \bar x$ as $k \to \infty$, there exists $\bar{m} \in \N$ such that $x^k \in \mathbb{B}(\bar x, \bar r)$ for all $k \geq \bar{m}$. Now, we take $k \geq \bar{m}$ and $d \in S(x^k, \bar x)$ arbitrarily. Note that ${\rm D}h(x^k) d = 0$. Thus, inequality~\eqref{penalty_condition_second_order} and the definition of the Lagrange function derive
\begin{align}
\begin{aligned}
& d^{\top}(\nabla_{xx}^2 L(x^k, \mu^k,  \Omega_k)  +\sigma(x^k, \Omega_k) ) d 
\\
& \geq d^\T\sigma(x^k, \Omega_k) d - \rho_k\left\langle {\rm D}G(x^k)[d],V_k\left[{\rm D}G(x^k)[d]\right]\right\rangle - \varepsilon_k \Vert d \Vert^{2}. \nonumber 
\end{aligned}
\end{align}
This inequality and Lemma~\ref{lm: difference_sigma_and_subdifferential_SAKKT2} yield $d^{\top}(\nabla_{xx}^2 L(x^k, \mu^k,  \Omega_k)  +\sigma(x^k, \Omega_k) ) d \geq -\varepsilon_k\|d\|^2$. Hence, we see that $x^{\ast}$ is a CAKKT2 point. Moreover, Theorem~\ref{thm:AKKT2andCAKKT2} ensures that $x^{\ast}$ is also an AKKT2 point.
\end{proof}
\noindent


\section{Conclusion}\label{sec:conclusion}

This paper presents a first attempt at developing second-order sequential optimality conditions for NSDP problems. In particular, we proposed AKKT2 and CAKKT2 for NSDP, which extend the AKKT2 and CAKKT2 conditions originally developed for NLP. We showed that every local minimizer of~\eqref{NSDP} satisfies CAKKT2 without requiring any assumption. As a consequence, AKKT2 is also satisfied because CAKKT2 implies AKKT2. Furthermore, we analyzed the connection between the AKKT2-type conditions and the existing WSONC, and proposed an algorithm capable of generating points that satisfy the AKKT2-type conditions.
As a future work, we plan to develop algorithms to solve subproblem~\eqref{penalty_subproblem} satisfying~\eqref{penalty_condition} and
extend stronger variants of AKKT2 by enlarging the perturbed critical subspace, which has already been studied in~\cite{fischer2023achieving, li2025strong} for NLP.
We also aim to define strict CQs imposed to AKKT2-type conditions, that is, the weakest qualifications under which AKKT2-type conditions reduces to KKT and WSONC, respectively. Another direction is to extend the AKKT2-type conditions to nonlinear symmetric cone optimization problems.

\section*{Acknowledgements} 
This work was supported by the JST SPRING (JPMJSP2110), 
Japan Society for the Promotion of Science,
Grant-in-Aid for Scientific Research (C) (JP25K15002, JP21K11769), 
Grant-in-Aid for Scientific Research (B) (JP25K03082),
and Grant-in-Aid for Early-Career Scientists (JP21K17709).


\bibliographystyle{plain}
\bibliography{references.bib}

\end{document}